\theoremstyle{plain}
\newtheorem{thm}{\protect\theoremname}
  \theoremstyle{plain}
  \newtheorem{prop}[thm]{\protect\propositionname}
  \theoremstyle{remark}
  \newtheorem{rem}[thm]{\protect\remarkname}
  \theoremstyle{plain}
  \newtheorem{lem}[thm]{\protect\lemmaname}
\newcommand{\1}{\mbox{1\hspace{-1mm}I}}
\numberwithin{equation}{section}
  \providecommand{\lemmaname}{Lemma}
  \providecommand{\propositionname}{Proposition}
  \providecommand{\remarkname}{Remark}
\providecommand{\theoremname}{Theorem}
\begin{document}
\selectlanguage{american}%
\global\long\def\1{\mbox{1\hspace{-1mm}I}}

\selectlanguage{english}%

\title{CONTROL PROBLEM ON SPACE OF RANDOM VARIABLES AND MASTER EQUATION }

\author{Alain Bensoussan\\
International Center for Decision and Risk Analysis\\
Jindal School of Management, University of Texas at Dallas\thanks{Also with the College of Science and Engineering, Systems Engineering
and Engineering Management, City University Hong Kong. Research supported
by the National Science Foundation under grant DMS-1303775 and the
Research Grants Council of the Hong Kong Special Administrative Region
(CityU 500113). } \\
Sheung Chi Phillip Yam\\
Department of Statistics, The Chinese University of Hong Kong\thanks{Research supported by The Hong Kong RGC GRF 14301015 with the project
title: Advance in Mean Field Theory.}}
\maketitle
\begin{abstract}
We study in this paper a control problem in a space of random variables.
We show that its Hamilton Jacobi Bellman equation is related to the
Master equation in Mean field theory. P.L. Lions in (\cite{PLL}),(\cite{PL2}
introduced the Hilbert space of square integrable random variables
as a natural space for writing the Master equation which appears in
the mean field theory. W. Gangbo and A. \'{S}wi\k{e}ch \cite{GAS}
considered this type of equation in the space of probability measures
equipped with the Wasserstein metric and use the concept of Wasserstein
gradient. We compare the two approaches and provide some extension
of the results of Gangbo and \'{S}wi\k{e}ch.
\end{abstract}

\section{INTRODUCTION }

We study first an abstract control problem where the state is in a
Hilbert space. We then show how this model applies when the Hilbert
space is the space of square integrable random variables, and for
certain forms of the cost functions. We see that it applies directly
to the solution of the Master equation in Mean Field games theory.
We compare our results with those of W. Gangbo and A. \'{S}wi\k{e}ch
\cite{GAS} and show that the approach of the Hilbert space of square
integrable random variables simplifies greatly the development.

\section{AN ABSTRACT CONTROL PROBLEM}

\subsection{\label{sub:SETTING-OF-THE}SETTING OF THE PROBLEM}

We begin by defining an abstract control problem, without describing
the application. We consider a Hilbert space $\mathcal{H}$, whose
elements are denoted by $X.$ We identify $\mathcal{H}$ with its
dual. The scalar product is denoted by $((,))$ and the norm by ||.||.
We then consider functionals $\mathcal{F}(X)$ and $\mathcal{F}_{T}(X)$
which are continuously differentiable on $\mathcal{H}$. The gradients
$D_{X}\mathcal{F}(X)$ and $D_{X}\mathcal{F}_{T}(X)$ are Lipschitz
continuous 

\begin{align}
||D_{X}\mathcal{F}(X_{1})-D_{X}\mathcal{F}(X_{2})|| & \leq c||X_{1}-X_{2}||\label{eq:2-1}\\
||D_{X}\mathcal{F}_{T}(X_{1})-D_{X}\mathcal{F}_{T}(X_{2})|| & \leq c||X_{1}-X_{2}||\nonumber 
\end{align}
To simplify notation, we shall also assume that 

\begin{equation}
||D_{X}\mathcal{F}(0)||,\:||D_{X}\mathcal{F}_{T}(0)||\leq c\label{eq:2-1-1}
\end{equation}
So we have 

\begin{equation}
||D_{X}\mathcal{F}(X)||\leq c(1+||X||)\label{eq:2-11-1}
\end{equation}
and 

\begin{equation}
|\mathcal{F}(X)|\leq C(1+||X||^{2}),\;\label{eq:2-12-4}
\end{equation}
where we denote by $C$ a generic constant. The same estimates hold
also for $\mathcal{F}_{T}(X).$ 

A control is a function $v(s)$ which belongs to $L^{2}(0,T;\mathcal{H}).$
We associate to a control $v(.)$ the state $X(s)$ satisfying 

\begin{align}
\frac{dX}{ds} & =v(s)\label{eq:2-2}\\
X(t) & =X\nonumber 
\end{align}
We may write it as $X_{Xt}(s)$ to emphasize the initial conditions
and even $X_{Xt}(s;v(.))$ to emphasize the dependence in the control.
The function $X(s)$ belongs to the Sobolev space $H^{1}(t,T;\mathcal{H}).$
We then define the cost functional 

\begin{equation}
J_{Xt}(v(.))=\frac{\lambda}{2}\int_{t}^{T}||v(s)||^{2}ds+\int_{t}^{T}\mathcal{F}(X(s))ds+\mathcal{F}_{T}(X(T))\label{eq:2-3}
\end{equation}
and the value function 

\begin{equation}
V(X,t)=\inf_{v(.)}\,J_{Xt}(v(.))\label{eq:2-4}
\end{equation}

\subsection{BELLMAN EQUATION}

We want to show the following 
\begin{thm}
\label{theo2-1} We assume (\ref{eq:2-1}), (\ref{eq:2-1-1}) and 

\begin{equation}
\lambda>cT(1+T)\label{eq:2-5}
\end{equation}
The value function (\ref{eq:2-4}) is $C^{1}$ and satisfies the growth
conditions 

\begin{align}
|V(X,t)| & \leq C(1+||X||^{2})\label{eq:2-6}\\
||D_{X}V(X,t)|| & \leq C(1+||X||),\:|\frac{\partial V(X,t)}{\partial t}|\leq C(1+||X||^{2})\nonumber 
\end{align}
where $C$ is a generic constant. Moreover $D_{X}V(X,t)$ and $\dfrac{\partial V(X,t)}{\partial t}$
are Lipschitz continuous, more precisely 

\begin{align}
||D_{X}V(X^{1},t^{1})-D_{X}V(X^{2},t^{2})|| & \leq C||X^{1}-X^{2}||+C|t^{1}-t^{2}|(1+||X^{1}||+||X^{2}||)\label{eq:2-7}\\
|\dfrac{\partial V(X^{1},t^{1})}{\partial t}-\dfrac{\partial V(X^{2},t^{2})}{\partial t}| & \leq C||X^{1}-X^{2}||(1+||X^{1}||+||X^{2}||)+C|t^{1}-t^{2}|(1+||X^{1}||^{2}+||X^{2}||^{2})\nonumber 
\end{align}

It is the unique solution, satisfying conditions (\ref{eq:2-6}) and
(\ref{eq:2-7}) of Bellman equation 
\end{thm}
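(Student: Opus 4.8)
\emph{Strategy and Step 1 (the forward--backward system).} Since the dynamics \eqref{eq:2-2} are pure integration, $X(s)=X+\int_t^s v$, this is a calculus-of-variations problem whose optimum is described by a coupled forward--backward system. A control $v(\cdot)$ is a critical point of $J_{Xt}$ if and only if, after an application of Fubini, $v(s)=-p(s)/\lambda$, where the adjoint state satisfies
\[
p(s)=D_X\mathcal{F}_T\big(X(T)\big)+\int_s^T D_X\mathcal{F}\big(X(\sigma)\big)\,d\sigma,\qquad X(s)=X-\tfrac1\lambda\int_t^s p(\sigma)\,d\sigma .
\]
Eliminating $X$ turns this into a fixed-point equation $p=\Phi_{X,t}(p)$ on $C([t,T];\mathcal{H})$; using only the Lipschitz bound \eqref{eq:2-1} one checks that $\Phi_{X,t}$ is a contraction with ratio $cT(1+T)/\lambda$, which is $<1$ by \eqref{eq:2-5}, so there is a unique critical pair $(X_{Xt},p_{Xt})$. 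To see that $v_{Xt}:=-p_{Xt}/\lambda$ is actually the minimizer, write any competitor as $v_{Xt}+\delta$ and use the semiconvexity inequality $\mathcal{F}(Y)\geq\mathcal{F}(Z)+((D_X\mathcal{F}(Z),Y-Z))-\tfrac c2\|Y-Z\|^2$ (valid, with the same for $\mathcal{F}_T$, by \eqref{eq:2-1}): the first-order terms cancel by the fixed-point equation, leaving $J_{Xt}(v_{Xt}+\delta)-J_{Xt}(v_{Xt})\geq\tfrac12\big(\lambda-cT(1+T)\big)\|\delta\|_{L^2}^2\geq 0$. Hence $V(X,t)=J_{Xt}(v_{Xt})$.

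\emph{Step 2 (regularity of $V$).} Feeding $p=0$ into the contraction estimate and using \eqref{eq:2-11-1} gives $\|p_{Xt}\|_\infty\leq C(1+\|X\|)$, hence $\|v_{Xt}\|_{L^2}\leq C(1+\|X\|)$ and $\|X_{Xt}\|_\infty\leq C(1+\|X\|)$; inserting these into \eqref{eq:2-3}, \eqref{eq:2-12-4} yields \eqref{eq:2-6} for $|V|$. Comparing the fixed-point equations for two sets of data $(X^1,t^1)$, $(X^2,t^2)$ — the $t$-dependence requiring some care because the interval of definition moves — gives $\|p_{X^1t^1}-p_{X^2t^2}\|_\infty\leq C\|X^1-X^2\|+C|t^1-t^2|(1+\|X^1\|+\|X^2\|)$ and similarly for $X_{Xt}$. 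Next, from $V(X+h,t)\leq J_{(X+h)t}(v_{Xt})$, whose state is $X_{Xt}(\cdot)+h$, combined with the symmetric inequality $V(X,t)\leq J_{Xt}(v_{(X+h)t})$ and the Lipschitz bound just obtained, one gets $V(X+h,t)-V(X,t)=((p_{Xt}(t),h))+O(\|h\|^2)$, locally uniformly; thus $V(\cdot,t)$ is Fréchet differentiable with $D_X V(X,t)=p_{Xt}(t)$, and the first estimate in \eqref{eq:2-7} is the displayed Lipschitz bound at $s=t$. Finally, the dynamic programming principle on $[t,t+\varepsilon]$, tested with $v\equiv0$ for one inequality and with the optimal control (for which $v_{Xt}(t)=-p_{Xt}(t)/\lambda$) for the other, together with the differentiability just proved, yields
\[
\frac{\partial V}{\partial t}(X,t)=\frac1{2\lambda}\,\|D_X V(X,t)\|^2-\mathcal{F}(X),\qquad V(X,T)=\mathcal{F}_T(X),
\]
which is the Bellman equation of the statement; the growth and Lipschitz assertions for $\partial_t V$ in \eqref{eq:2-6}, \eqref{eq:2-7} then follow from those of $D_X V$ and $\mathcal{F}$, the quadratic factor $1+\|X^1\|^2+\|X^2\|^2$ coming from the difference of squares $\|D_X V(X^1,\cdot)\|^2-\|D_X V(X^2,\cdot)\|^2$.

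\emph{Step 3 (uniqueness).} Let $W$ be any $C^1$ function obeying \eqref{eq:2-6}, \eqref{eq:2-7} and the Bellman equation. For an arbitrary control $v$ with state $X(\cdot)$, the map $s\mapsto W(X(s),s)$ is absolutely continuous and, by the equation,
\[
\frac{d}{ds}W(X(s),s)=\frac1{2\lambda}\|D_X W\|^2-\mathcal{F}(X(s))+((D_X W,v(s)))\;\geq\;-\frac\lambda2\|v(s)\|^2-\mathcal{F}(X(s)),
\]
with equality if and only if $v(s)=-\tfrac1\lambda D_X W(X(s),s)$. Integrating over $[t,T]$ and using $W(\cdot,T)=\mathcal{F}_T$ gives $J_{Xt}(v)\geq W(X,t)$; the linear growth of $D_X W$ makes the closed-loop equation $\dot{\bar X}(s)=-\tfrac1\lambda D_X W(\bar X(s),s)$, $\bar X(t)=X$, globally solvable on $[t,T]$, and its control attains equality, so $W(X,t)=\inf_v J_{Xt}(v)=V(X,t)$.

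\emph{Main obstacle.} Once \eqref{eq:2-5} is in force, the contraction and the verification argument are routine; the genuinely delicate points are the perturbation analysis of $p_{Xt}$ with respect to $t$ (the domain $[t,T]$ itself depends on the parameter) and the bookkeeping of the quadratic growth factors required by the second estimate in \eqref{eq:2-7}.
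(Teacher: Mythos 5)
Your overall route is the same as the paper's: solve the two-point boundary value problem by a contraction with ratio $cT(1+T)/\lambda<1$ on $C([t,T];\mathcal{H})$ (the paper iterates on the state $Y$, you on the adjoint $p$ --- same constant, same mechanism), identify the critical point as the minimizer via the semiconvexity coming from \eqref{eq:2-1} (the paper phrases this as strict convexity of $J_{Xt}$, inequality \eqref{eq:2-10-1}, but the quadratic remainder $\tfrac12(\lambda-cT(1+T))\|\delta\|_{L^2}^2$ is identical), obtain $D_XV(X,t)=p_{Xt}(t)$ by the two-sided comparison of $J_{X^1t}$ and $J_{X^2t}$ along each other's optimal controls, get $\partial_tV$ from the dynamic programming principle, and prove uniqueness by verification. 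All of that is sound.

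The one concrete flaw is in the derivation of the time derivative: testing the dynamic programming principle with $v\equiv 0$ gives only
\begin{equation*}
\liminf_{\epsilon\to 0}\frac{V(X,t+\epsilon)-V(X,t)}{\epsilon}\;\geq\;-\mathcal{F}(X),
\end{equation*}
which does not match the upper bound $\limsup_{\epsilon\to0}\frac{V(X,t+\epsilon)-V(X,t)}{\epsilon}\leq\frac{1}{2\lambda}\|D_XV(X,t)\|^2-\mathcal{F}(X)$ obtained from the optimal control, so the sandwich does not close and differentiability in $t$ is not established as written. You need a near-optimal competitor for that inequality: either the constant feedback $v\equiv-\tfrac1\lambda D_XV(X,t)$ on $[t,t+\epsilon]$, or, as in the paper (inequalities \eqref{eq:2-25}--\eqref{eq:2-26}), the time-shifted optimal trajectory used as a suboptimal control for the problem starting at $(X,t+\epsilon)$. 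With that substitution the two one-sided bounds coincide and the rest of your argument, including the verification step, goes through unchanged.
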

\begin{align}
\dfrac{\partial V}{\partial t}-\frac{1}{2\lambda}||D_{X}V||^{2}+\mathcal{F}(X) & =0\nonumber \\
V(X,T)=\mathcal{F}_{T}(X)\label{eq:2-8}
\end{align}

The control problem (\ref{eq:2-2}), (\ref{eq:2-3}) has a unique
solution. 
\begin{proof}
We begin by studying the properties of the cost functional $J_{Xt}(v(.)).$
We first claim that $J_{Xt}(v(.))$ is Gâteaux differentiable in the
space $L^{2}(t,T;\mathcal{H})$, for $X,t$ fixed. Define $X_{v}(s)$
by 

\[
\frac{dX_{v}(s)}{ds}=v(s),\:X_{v}(t)=X
\]
 and $Z_{v}(s)$ by 

\[
-\frac{dZ_{v}(s)}{ds}=D_{X}\mathcal{F}(X_{v}(s)),\:Z_{v}(t)=D_{X}\mathcal{F}_{T}(X_{v}(T))
\]
then we can prove easily that 

\begin{equation}
\frac{d}{d\mu}J_{Xt}(v(.)+\mu\tilde{v}(.))|_{\mu=0}=\int_{t}^{T}((\lambda v(s)+Z_{v}(s),\tilde{v}(s)))ds\label{eq:2-9}
\end{equation}
Let us prove that the functional $J_{Xt}(v(.))$ is strictly convex.
Let $v_{1}(.)$ and $v_{2}(.)$ in $L^{2}(t,T;\mathcal{H}).$ We write 

\[
J_{Xt}(\theta v_{1}(.)+(1-\theta)v_{2}(.))=J_{Xt}(v_{1}(.)+(1-\theta)(v_{2}(.)-v_{1}(.)))
\]
\[
=J_{Xt}(v_{1}(.))+\int_{0}^{1}\frac{d}{d\mu}J_{Xt}(v_{1}(.)+\mu(1-\theta)(v_{2}(.)-v_{1}(.)))\,d\mu
\]
 From formula (\ref{eq:2-9}) we have also 

\[
\frac{d}{d\mu}J_{Xt}(v(.)+\mu\theta\tilde{v}(.))=\theta\int_{t}^{T}((\lambda(v(s)+\mu\theta\tilde{v}(.))+Z_{v+\mu\theta\tilde{v}}(s),\tilde{v}(s)))ds
\]
Therefore

\[
\int_{0}^{1}\frac{d}{d\mu}J_{Xt}(v_{1}(.)+\mu(1-\theta)(v_{2}(.)-v_{1}(.)))\,d\mu=(1-\theta)\int_{0}^{1}d\mu\left\{ \int_{t}^{T}((\lambda(v_{1}(s)+\mu(1-\theta)(v_{2}(s)-v_{1}(s)))\right.
\]
\[
+\left.Z_{v_{1}+\mu(1-\theta)(v_{2}-v_{1})}(s),\,v_{2}(s)-v_{1}(s)\,))ds\right\} 
\]
Similarly we write 

\[
J_{Xt}(\theta v_{1}(.)+(1-\theta)v_{2}(.))=J_{Xt}(v_{2}(.)+\theta(v_{1}(.)-v_{2}(.)))
\]
\[
=J_{Xt}(v_{2}(.))+\int_{0}^{1}\frac{d}{d\mu}J_{Xt}(v_{2}(.)+\mu\theta(v_{1}(.)-v_{2}(.)))\,d\mu
\]
 and 

\begin{align*}
\int_{0}^{1}\frac{d}{d\mu}J_{Xt}(v_{2}(.)+\mu\theta(v_{1}(.)-v_{2}(.)))\,d\mu & =\theta\int_{0}^{1}d\mu\left\{ \int_{t}^{T}((\lambda(v_{2}(s)+\mu\theta(v_{1}(s)-v_{2}(s)))\right.+\\
+ & \left.Z_{v_{2}+\mu\theta(v_{1}-v_{2})}(s),\,v_{1}(s)-v_{2}(s)\,))ds\right\} 
\end{align*}
 We shall set $Z_{1}(s)=Z_{v_{1}+\mu(1-\theta)(v_{2}-v_{1})}(s)$
and $Z_{2}(s)=Z_{v_{2}+\mu\theta(v_{1}-v_{2})}(s).$ Combining formulas,
we can write 

\begin{equation}
J_{Xt}(\theta v_{1}(.)+(1-\theta)v_{2}(.))=\theta J_{Xt}(v_{1}(.))+(1-\theta)J_{Xt}(v_{2}(.))+\label{eq:2-10}
\end{equation}
\[
+\theta(1-\theta)\left[-\frac{\lambda}{2}\int_{t}^{T}||v_{1}(s)-v_{2}(s)||^{2}ds+\int_{0}^{1}d\mu\int_{t}^{T}((Z_{1}(s)-Z_{2}(s),v_{2}(s)-v_{1}(s)\,))ds\right]
\]
Let $X_{1}(s)$ and $X_{2}(s)$ denote the states corresponding to
the controls $v_{1}(.)+\mu(1-\theta)(v_{2}(.)-v_{1}(.))$ and $v_{2}(.)+\mu\theta(v_{1}(.)-v_{2}(.))$.
One checks easily that 

\[
X_{1}(s)-X_{2}(s)=(1-\mu)\int_{t}^{s}(v_{1}(\sigma)-v_{2}(\sigma))d\sigma
\]
 and from the definition of $Z_{1}(.)$, $Z_{2}(.)$ we obtain 
\[
||Z_{1}(s)-Z_{2}(s)||\leq c[||X_{1}(T)-X_{2}(T)||+\int_{s}^{T}||X_{1}(\sigma)-X_{2}(\sigma)||d\sigma]
\]
 and combining formulas, we can assert 

\[
||Z_{1}(s)-Z_{2}(s)||\leq c(1-\mu)(1+T)\int_{t}^{T}||v_{1}(\sigma)-v_{2}(\sigma)||d\sigma
\]
 Going back to (\ref{eq:2-10}) we obtain easily 

\begin{equation}
J_{Xt}(\theta v_{1}(.)+(1-\theta)v_{2}(.))\leq\theta J_{Xt}(v_{1}(.))+(1-\theta)J_{Xt}(v_{2}(.))+\label{eq:2-10-1}
\end{equation}
\[
-\frac{\theta(1-\theta)}{2}(\lambda-cT(1+T))\int_{t}^{T}||v_{1}(s)-v_{2}(s)||^{2}ds
\]
 and from the assumption (\ref{eq:2-5}) we obtain immediately that
$J_{Xt}(v(.))$ is strictly convex. Next we write 

\[
\mathcal{F}(X(s))-\mathcal{F}(X)=\int_{0}^{1}((D_{X}\mathcal{F}(X+\theta\int_{t}^{s}v(\sigma)d\sigma),\int_{t}^{s}v(\sigma)d\sigma))
\]
 so , using (\ref{eq:2-11-1}) we obtain 

\[
|\mathcal{F}(X(s))-\mathcal{F}(X)|\leq c(1+||X||)||\int_{t}^{s}v(\sigma)d\sigma||+\frac{c}{2}||\int_{t}^{s}v(\sigma)d\sigma||^{2}
\]
\[
\leq\frac{c^{2}(1+||X||)^{2}}{2\delta}+\frac{c+\delta}{2}||\int_{t}^{s}v(\sigma)d\sigma||^{2}
\]
for any $\delta>0.$ Using (\ref{eq:2-12-4}) we can assert that 

\[
|\mathcal{F}(X(s))|\leq C_{\delta}(1+||X||^{2})+\frac{c+\delta}{2}T\int_{t}^{T}||v(\sigma)||^{2}d\sigma
\]
A similar estimate holds for $\mathcal{F}_{T}(X(T)).$ Therefore,
collecting results, we obtain
\begin{align*}
|\int_{t}^{T}\mathcal{F}(X(s))ds+\mathcal{F}_{T}(X(T))| & \leq C_{\delta}(1+||X||^{2})(1+T)\\
+ & \frac{c+\delta}{2}T(1+T)\int_{t}^{T}||v(s)||^{2}ds
\end{align*}
It follows that 

\begin{align}
J_{Xt}(v(.)) & \geq\frac{\lambda-(c+\delta)T(1+T)}{2}\int_{t}^{T}||v(s)||^{2}ds-C_{\delta}(1+||X||^{2})(1+T)\label{eq:2-12-2}
\end{align}
Since $\lambda-cT(1+T)>0,$we can find $\delta>0$ sufficiently small
so that $\lambda-(c+\delta)T(1+T)>0.$ This implies that $J_{Xt}(v(.))\rightarrow+\infty$
as $\int_{t}^{T}||v(s)||^{2}ds\rightarrow+\infty.$ This property
and the strict convexity imply that the functional $J_{Xt}(v(.))$
has a minimum which is unique. The Gâteaux derivative must vanish
at this minimum denoted by $u(.).$ The corresponding state is denoted
by $Y(.).$ From formula (\ref{eq:2-9}) we obtain also the existence
of a solution of the two-point boundary value problem 

\begin{align}
\frac{dY}{ds} & =-\frac{Z(s)}{\lambda},\quad-\frac{dZ}{ds}=D_{X}\mathcal{F}(Y(s))\label{eq:2-11}\\
Y(t) & =X,\qquad Z(T)=D_{X}\mathcal{F}_{T}(Y(T))\nonumber 
\end{align}
and the optimal control $u(.)$ is given by the formula 

\begin{equation}
u(s)=-\frac{Z(s)}{\lambda}\label{eq:2-12}
\end{equation}
In fact, the system (\ref{eq:2-11}) can be studied directly, and
we can show directly that it has one and only one solution. We notice
that it is a 2nd order differential equation, since 

\begin{align}
\frac{d^{2}Y}{ds^{2}} & =\frac{1}{\lambda}D_{X}\mathcal{F}(Y(s))\label{eq:2-12-1}\\
Y(t)=X & \quad\frac{dY}{ds}(T)=-\frac{1}{\lambda}D_{X}\mathcal{F}_{T}(Y(T))\nonumber 
\end{align}
 We can write also (\ref{eq:2-12-1}) as an integral equation 

\begin{equation}
Y(s)=X-\frac{s-t}{\lambda}D_{X}\mathcal{F}_{T}(Y(T))-\frac{1}{\lambda}\int_{t}^{T}D_{X}\mathcal{F}(Y(\sigma))(s\wedge\sigma-t)d\sigma\label{eq:2-13}
\end{equation}
and we can view this equation as a fixed point equation in the space
$C^{0}([t,T];\mathcal{H}),$ namely $Y(.)=\mathcal{K}(Y(.)),$ where
$\mathcal{K}$ is defined by the right hand side of (\ref{eq:2-13}).
One can show that $\mathcal{K}$ is a contraction, hence $Y(.)$ is
uniquely defined. Note also, that if we have a solution of (\ref{eq:2-11})
and if $u(.)$ is defined by (\ref{eq:2-12}) the control $u(.)$
satisfies the necessary condition of optimality for the functional
$J_{Xt}(v(.)).$ Since this functional is convex, the necessary condition
of optimality is also sufficient and thus $u(.)$ is optimal. The
value function is thus defined by the formula 

\begin{equation}
V(X,t)=\frac{1}{2\lambda}\int_{t}^{T}||Z(s)||^{2}ds+\int_{t}^{T}\mathcal{F}(Y(s))ds+\mathcal{F}_{T}(Y(T))\label{eq:2-14}
\end{equation}
We now study the properties of the value function. We begin with the
first property (\ref{eq:2-6}). Using (\ref{eq:2-12-2}) we obtain 

\[
V(X,t)\geq-C(1+||X||^{2})
\]
 On the other hand, we have 

\[
V(X,t)\leq J_{Xt}(0)=(T-t)\mathcal{F}(X)+\mathcal{F}_{T}(X)\leq C(1+||X||^{2})
\]
 and the first estimate (\ref{eq:2-6}) is obtained. 

We proceed in getting estimates for the solution $Y(.)$ of (\ref{eq:2-13}).
We write 

\[
||Y(.)||=\sup_{t\leq s\leq T}||Y(s)||
\]
 Using easy majorations, we obtain 

\begin{align}
||Y(.)|| & \leq\frac{||X||\lambda+cT(T+1)}{\lambda-cT(T+1)}\label{eq:2-15}\\
||Z(.)|| & \leq\frac{\lambda(1+T)c(1+||X||)}{\lambda-cT(T+1)}\nonumber \\
||u(.)|| & \leq\frac{(1+T)c(1+||X||)}{\lambda-cT(T+1)}\nonumber 
\end{align}

We then study how these functions depend on the pair $X,t.$ We recall
that $Y(s)=Y_{Xt}(s).$ Let us consider two points $X_{1},t_{1}$
and $X_{2},t_{2}$ and denote $Y_{1}(s)=Y_{X_{1}t_{1}}(s),$ $Y_{2}(s)=Y_{X_{2}t_{2}}(s)$.
To fix ideas we assume $t_{1}<t_{2}.$ For $s>t_{2}$ we have 

\begin{align*}
Y_{1}(s)-Y_{2}(s) & =X_{1}-X_{2}-\frac{1}{\lambda}(D_{X}\mathcal{F}_{T}(Y_{1}(T))-D_{X}\mathcal{F}_{T}(Y_{2}(T)))(s-t_{2})-\frac{1}{\lambda}D_{X}\mathcal{F}_{T}(Y_{1}(T))(t_{2}-t_{1})\\
-\frac{1}{\lambda} & \int_{t_{2}}^{T}(D_{X}\mathcal{F}(Y_{1}(\sigma))-D_{X}\mathcal{F}(Y_{2}(\sigma)))(s\wedge\sigma-t_{2})d\sigma-\frac{1}{\lambda}\int_{t_{1}}^{t_{2}}D_{X}\mathcal{F}(Y_{1}(\sigma))(s\wedge\sigma-t_{1})d\sigma
\end{align*}
 From which we obtain 
\begin{align*}
\sup_{t_{2}\leq s\leq T}||Y_{1}(s)-Y_{2}(s)|| & \leq||X_{1}-X_{2}||+\frac{c}{\lambda}T(1+T)\sup_{t_{2}\leq s\leq T}||Y_{1}(s)-Y_{2}(s)||+\\
+ & \frac{t_{2}-t_{1}}{\lambda}[||D_{X}\mathcal{F}_{T}(Y_{1}(T))||+\int_{t_{1}}^{T}||D_{X}\mathcal{F}(Y_{1}(s))||ds]
\end{align*}
Using the properties of $D_{X}\mathcal{F}$ and $D_{X}\mathcal{F}_{T}$
and (\ref{eq:2-15}) we can assert that 

\[
\sup_{t_{2}\leq s\leq T}||Y_{1}(s)-Y_{2}(s)||\leq\frac{\lambda}{\lambda-cT(T+1)}\left(||X_{1}-X_{2}||+(t_{2}-t_{1})(1+T)c\frac{1+||X_{1}||}{\lambda-cT(T+1)}\right)
\]
 More globally we can write 
\begin{equation}
\sup_{\max(t_{1},t_{2})\leq s\leq T}||Y_{X_{1}t_{1}}(s)-Y_{X_{2}t_{2}}(s)||\leq\frac{\lambda}{\lambda-cT(T+1)}\left(||X_{1}-X_{2}||+|t_{2}-t_{1}|(1+T)c\frac{1+\,\max(||X_{1}||,||X_{2}||)}{\lambda-cT(T+1)}\right)\label{eq:2-16}
\end{equation}

In particular 

\begin{equation}
\sup_{t\leq s\leq T}||Y_{X_{1}t}(s)-Y_{X_{2}t}(s)||\leq\frac{\lambda||X_{1}-X_{2}||}{\lambda-cT(T+1)}\label{eq:2-17}
\end{equation}
Recalling that from the system (\ref{eq:2-11}) we have 

\[
Z(s)=\int_{s}^{T}D_{X}\mathcal{F}(Y(\sigma))d\sigma+D_{X}\mathcal{F}_{T}(Y(T))
\]
 and noting $Z(s)=Z_{Xt}(s)$ we deduce from (\ref{eq:2-17}) that 

\begin{equation}
\sup_{t\leq s\leq T}||Z_{X_{1}t}(s)-Z_{X_{2}t}(s)||\leq\frac{c(T+1)\lambda||X_{1}-X_{2}||}{\lambda-cT(T+1)}\label{eq:2-18}
\end{equation}
We next write 

\[
J_{X_{1}t}(u_{1}(.))-J_{X_{2}t}(u_{1}(.))\leq V(X_{1},t)-V(X_{2},t)\leq J_{X_{1}t}(u_{2}(.))-J_{X_{2}t}(u_{2}(.))
\]
where $u_{1}(.)$ and $u_{2}(.)$ are the optimal controls for the
problems with initial conditions $(X_{1},t)$ and $(X_{2},t),$ respectively.
Denoting by $Y_{X_{1}t}(s)$ and $Y_{X_{2}t}(s)$ the optimal states
and by $Y_{X_{1}t}(s;u_{2}(.)),\;Y_{X_{2}t}(s;u_{1}(.))$ the trajectories
( not optimal) when the control $u_{2}(.)$ is used with the initial
conditions $(X_{1},t)$ and when the control $u_{1}(.)$ is used with
the initial conditions $(X_{2},t),$ we have 

\begin{align*}
Y_{X_{1}t}(s;u_{2}(.))-Y_{X_{2}t}(s) & =Y_{X_{1}t}(s)-Y_{X_{2}t}(s;u_{1}(.))=X_{1}-X_{2}
\end{align*}
Therefore 
\[
V(X_{1},t)-V(X_{2},t)\leq\int_{t}^{T}(\mathcal{F}(Y_{X_{2}t}(s)+X_{1}-X_{2})-\mathcal{F}(Y_{X_{2}t}(s)))ds+\mathcal{F}_{T}(Y_{X_{2}t}(T)+X_{1}-X_{2})-\mathcal{F}_{T}(Y_{X_{2}t}(T))
\]
 and by techniques already used it follows 
\[
V(X_{1},t)-V(X_{2},t)\leq((\int_{t}^{T}D_{X}\mathcal{F}(Y_{X_{2}t}(s))ds+D_{X}\mathcal{F}_{T}(Y_{X_{2}t}(T)),X_{1}-X_{2}))+\frac{c}{2}(1+T)||X_{1}-X_{2}||^{2}
\]
 which is in fact 

\begin{equation}
V(X_{1},t)-V(X_{2},t)\leq((Z_{X_{2}t}(t),X_{1}-X_{2}))+\frac{c}{2}(1+T)||X_{1}-X_{2}||^{2}\label{eq:2-19}
\end{equation}
By interchanging the roles of $X_{1}$ and $X_{2}$ we also obtain 

\begin{equation}
V(X_{1},t)-V(X_{2},t)\geq((Z_{X_{1}t}(t),X_{1}-X_{2}))-\frac{c}{2}(1+T)||X_{1}-X_{2}||^{2}\label{eq:2-20}
\end{equation}
Using the estimate (\ref{eq:2-18}) we can also write 

\begin{equation}
V(X_{1},t)-V(X_{2},t)\geq((Z_{X_{2}t}(t),X_{1}-X_{2}))-c(T+1)[\frac{\lambda}{\lambda-cT(T+1)}+\frac{1}{2}]||X_{1}-X_{2}||^{2}\label{eq:2-21}
\end{equation}
Combining (\ref{eq:2-19}) and (\ref{eq:2-21}) we immediately get 

\begin{equation}
|V(X_{1},t)-V(X_{2},t)-((Z_{X_{2}t}(t),X_{1}-X_{2}))|\leq c(T+1)[\frac{\lambda}{\lambda-cT(T+1)}+\frac{1}{2}]||X_{1}-X_{2}||^{2}\label{eq:2-22}
\end{equation}
This shows immediately that $V(X,t)$ is differentiable in $X$ and
that 

\begin{equation}
D_{X}V(X,t)=Z(t)=-\lambda u(t)\label{eq:2-23}
\end{equation}
From the 2nd estimate (\ref{eq:2-15}) we immediately obtain the 2nd
estimate (\ref{eq:2-6}). We continue with the derivative in $t.$
We first write the optimality principle 

\begin{equation}
V(X,t)=\frac{\lambda}{2}\int_{t}^{t+\epsilon}||u(s)||^{2}ds+\int_{t}^{t+\epsilon}\mathcal{F}(Y(s))ds+V(Y(t+\epsilon),t+\epsilon)\label{eq:2-24}
\end{equation}
which is a simple consequence of the definition of the value function
and of the existence of an optimal control. From (\ref{eq:2-22})
we can write

\[
V(X_{2},t)-V(X_{1},t)-((Z_{X_{2}t}(t),X_{2}-X_{1}))\leq C||X_{1}-X_{2}||^{2}
\]
 where $C$ is the constant appearing in the right hand side of (\ref{eq:2-22}).
We apply with $X_{2}=Y(t+\epsilon)$ , $X_{1}=X,$$t=t+\epsilon.$
We note that $Z_{Y(t+\epsilon),t+\epsilon}(t+\epsilon)=Z_{Xt}(t+\epsilon)=-\lambda u(t+\epsilon),$since
$u(s)$ for $t+\epsilon<s<t$ is optimal for the problem starting
with initial conditions $Y(t+\epsilon),\,t+\epsilon.$ Therefore 

\[
V(Y(t+\epsilon),t+\epsilon)-V(X,t+\epsilon)\leq-\lambda((u(t+\epsilon),\int_{t}^{t+\epsilon}u(s)ds))+C||\int_{t}^{t+\epsilon}u(s)ds||^{2}
\]
Using this inequality in (\ref{eq:2-24}) yields 

\[
V(X,t)-V(X,t+\epsilon)\leq\frac{\lambda}{2}\int_{t}^{t+\epsilon}||u(s)||^{2}ds+\int_{t}^{t+\epsilon}\mathcal{F}(Y(s))ds-\lambda((u(t+\epsilon),\int_{t}^{t+\epsilon}u(s)ds))+C||\int_{t}^{t+\epsilon}u(s)ds||^{2}
\]
from which we obtain 

\begin{equation}
\liminf_{\epsilon\rightarrow0}\frac{V(X,t+\epsilon)-V(X,t)}{\epsilon}\geq\frac{\lambda}{2}||u(t)||^{2}-\mathcal{F}(X)\label{eq:2-25}
\end{equation}
Next we have 

\[
V(X,t+\epsilon)\leq\frac{\lambda}{2}\int_{t+\epsilon}^{T}||u(s)||^{2}ds+\int_{t+\epsilon}^{T}\mathcal{F}(Y(s)-\int_{t}^{t+\epsilon}u(\sigma)d\sigma)ds+\mathcal{F}_{T}(Y(T)-\int_{t}^{t+\epsilon}u(\sigma)d\sigma)
\]
 therefore 
\begin{align*}
V(X,t+\epsilon)-V(X,t) & \leq-\frac{\lambda}{2}\int_{t}^{t+\epsilon}||u(s)||^{2}ds-\int_{t}^{t+\epsilon}\mathcal{F}(Y(s))ds+\\
+ & \int_{t+\epsilon}^{T}(\mathcal{F}(Y(s)-\int_{t}^{t+\epsilon}u(\sigma)d\sigma)-\mathcal{F}(Y(s)))ds+\\
+ & \mathcal{F}_{T}(Y(T)-\int_{t}^{t+\epsilon}u(\sigma)d\sigma)-\mathcal{F}_{T}(Y(T))
\end{align*}
 and using assumptions on $\mathcal{F}$, $\mathcal{F}_{T}$ it follows
that 
\[
V(X,t+\epsilon)-V(X,t)\leq-\frac{\lambda}{2}\int_{t}^{t+\epsilon}||u(s)||^{2}ds-\int_{t}^{t+\epsilon}\mathcal{F}(Y(s))ds+
\]
\[
-((\int_{t+\epsilon}^{T}D_{X}\mathcal{F}(Y(s))ds+D_{X}\mathcal{F}_{T}(Y(T)),\int_{t}^{t+\epsilon}u(\sigma)d\sigma))+\frac{c}{2}(1+T)||\int_{t}^{t+\epsilon}u(\sigma)d\sigma||^{2}
\]
which means 
\begin{align*}
V(X,t+\epsilon)-V(X,t) & \leq-\frac{\lambda}{2}\int_{t}^{t+\epsilon}||u(s)||^{2}ds-\int_{t}^{t+\epsilon}\mathcal{F}(Y(s))ds+\\
+ & \lambda((u(t+\epsilon),\int_{t}^{t+\epsilon}u(\sigma)d\sigma))+\frac{c}{2}(1+T)||\int_{t}^{t+\epsilon}u(\sigma)d\sigma||^{2}
\end{align*}
 We then obtain 

\begin{equation}
\limsup_{\epsilon\rightarrow0}\frac{V(X,t+\epsilon)-V(X,t)}{\epsilon}\leq\frac{\lambda}{2}||u(t)||^{2}-\mathcal{F}(X)\label{eq:2-26}
\end{equation}
and comparing with (\ref{eq:2-25}) we obtain immediately that $V(X,t)$
is differentiable in $t$ and the derivative is given by 

\begin{equation}
\frac{\partial V}{\partial t}(X,t)=\frac{\lambda}{2}||u(t)||^{2}-\mathcal{F}(X)\label{eq:2-27}
\end{equation}
Recalling (\ref{eq:2-23}) we see immediately that $V(X,t)$ is solution
of the HJB equation (\ref{eq:2-8}). The 2nd estimate (\ref{eq:2-6})
is an immediate consequence of the equation and the estimate on $||D_{X}V(X,t)||.$
We next turn to check the addtional estimates (\ref{eq:2-7}). We
have 

\[
D_{X}V(X_{1},t_{1})-D_{X}V(X_{2},t_{2})=Z_{X_{1}t_{1}}(t_{1})-Z_{X_{2}t_{2}}(t_{2})
\]
We assume $t_{1}<t_{2}$ then we can write 

\begin{align}
Z_{X_{1}t_{1}}(t_{1})-Z_{X_{2}t_{2}}(t_{2}) & =\int_{t_{1}}^{t_{2}}(D_{X}\mathcal{F}(Y_{X_{1}t_{1}}(s))-D_{X}\mathcal{F}(Y_{X_{2}t_{2}}(s)))ds+\nonumber \\
+ & D_{X}\mathcal{F}_{T}(Y_{X_{1}t_{1}}(T))-D_{X}\mathcal{F}_{T}(Y_{X_{2}t_{2}}(T))\label{eq:2-28}
\end{align}
Using previously used majorations, we can check 

\begin{equation}
||Z_{X_{1}t_{1}}(t_{1})-Z_{X_{2}t_{2}}(t_{2})||\leq\frac{\lambda c(T+1)}{\lambda-cT(T+1)}\left(||X_{1}-X_{2}||+|t_{2}-t_{1}|(1+T)c\frac{1+\,\max(||X_{1}||,||X_{2}||)}{\lambda-cT(T+1)}\right)\label{eq:2-29}
\end{equation}
and the first estimate (\ref{eq:2-7}) follows immediately. The 2nd
estimate (\ref{eq:2-7}) is a direct consequence of the HJB equation
and of the first estimate (\ref{eq:2-7}). So the value function has
the regularity indicated in the statement and satisfies the HJB equation.
Let us show that such a solution is necessarily unique. This is a
consequence of the verification property. Indeed consider any control
$v(.)\in L^{2}(t,T;\mathcal{H})$ and the state $X(s)$ solution of
(\ref{eq:2-2}). Let $V(x,t)$ be a solution of the HJB equation which
is $C^{1}$ and satisifies (\ref{eq:2-6}), (\ref{eq:2-7}). Then
the function $V(X(s),s)$ is differentiable and 

\begin{align*}
\frac{d}{ds}V(X(s),s) & =\frac{\partial V}{\partial s}(X(s),s)+((D_{X}V(X(s),s),v(s)))=\\
= & -\mathcal{F}(X(s))+\frac{1}{2\lambda}||D_{X}V(X(s),s)||^{2}+((D_{X}V(X(s),s),v(s)))\\
\geq & -\mathcal{F}(X(s))-\frac{\lambda}{2}||v(s)||^{2}
\end{align*}
 from which we get immediately by inegration $V(X,t)\leq J_{Xt}(v(.)).$
Now if we consider the equation 

\begin{equation}
\dfrac{d\hat{X}(s)}{ds}=-\frac{1}{\lambda}D_{X}V(\hat{X}(s),s),\;\hat{X}(t)=X\label{eq:2-30}
\end{equation}
it has a unique solution, since $D_{X}V(X,s)$ is uniformly Lipschitz
in $X.$ If we set $\hat{v}(s)=-\dfrac{1}{\lambda}D_{X}V(\hat{X}(s),s),$
we see easily that $V(X,t)=J_{Xt}(\hat{v}(.))$. So $V(X,t)$ coincides
with the value function, and thus we have only one possible solution.
This completes the proof of the theorem. $\blacksquare$
\end{proof}

\section{THE MASTER EQUATION }

\subsection{FURTHER REGULARITY ASSUMPTIONS. }

We now assume that 

\begin{equation}
\mathcal{F},\:\mathcal{F}_{T}\,\text{are}\:C^{2}\label{eq:3-1}
\end{equation}
The operators $D_{X}^{2}\mathcal{F}(X)$,$D_{X}^{2}\mathcal{F}_{T}(X)$
belong to $\mathcal{L}(\mathcal{H};\mathcal{H}).$ According to the
assumptions (\ref{eq:2-1}) we can assert that 

\begin{equation}
||D_{X}^{2}\mathcal{F}(X)||,\:||D_{X}^{2}\mathcal{F}_{T}(X)||\leq c\label{eq:3-2}
\end{equation}
where the norm of the operators is the norm of $\mathcal{L}(\mathcal{H};\mathcal{H}).$
Recalling the equation (\ref{eq:2-13}) for $Y(s),$ we differentiate
formally with respect to $X$ to obtain 

\begin{align}
D_{X}Y(s) & =I-\frac{s-t}{\lambda}D_{X}^{2}\mathcal{F}_{T}(Y(T))D_{X}Y(T)\label{eq:3-3}\\
-\frac{1}{\lambda} & \int_{t}^{T}D_{X}^{2}\mathcal{F}(Y(\sigma))D_{X}Y(\sigma)(s\wedge\sigma-t)d\sigma\nonumber 
\end{align}
so, $D_{X}Y(.)$ appears as the solution of a linear equation, and
we see easily that it has one and only one solution verifying 

\begin{equation}
\sup_{t\leq s\leq T}||D_{X}Y(s)||\leq\frac{\lambda}{\lambda-cT(T+1)}\label{eq:3-4}
\end{equation}
It is then easy to check that $D_{X}Y(s)$ is indeed the gradient
of $Y_{Xt}(s)$ with respect to $X$, and the estimate (\ref{eq:3-4})
is coherent with (\ref{eq:2-7}). Since $D_{X}V(X,t)=Z(t)=Z_{Xt}(t)$
with 

\[
Z(t)=\int_{t}^{T}D_{X}\mathcal{F}(Y(s))ds+D_{X}\mathcal{F}_{T}(Y(T))
\]
 we can differentiate to obtain 

\begin{equation}
D_{X}^{2}V(X,t)=\int_{t}^{T}D_{X}^{2}\mathcal{F}(Y(s))D_{X}Y(s)ds+D_{X}^{2}\mathcal{F}_{T}(Y(T))D_{X}Y(T)\label{eq:3-5}
\end{equation}
and 

\begin{equation}
||D_{X}^{2}V(X,t)||\leq\frac{\lambda c(T+1)}{\lambda-cT(T+1)}\label{eq:3-6}
\end{equation}
which is coherent with (\ref{eq:2-18}).

\subsection{MASTER EQUATION}

We obtain the Master equation, by simply differentiating the HJB equation
(\ref{eq:2-8}) with respect to $X.$ We set $\mathcal{U}(X,t)=D_{X}V(X,t)$.
We know from (\ref{eq:2-15}) that 

\begin{equation}
||\mathcal{U}(X,t)||\leq\frac{\lambda(1+T)c(1+||X||)}{\lambda-cT(T+1)}\label{eq:3-7}
\end{equation}
The function $\mathcal{U}(X,t)$ maps $\mathcal{H}\times(0,T)$ into
$\mathcal{H}.$ From (\ref{eq:3-6}) we see that it is differentiable
in $X,$ with $D_{X}\mathcal{U}(X,t):\mathcal{H}\times(0,T)\rightarrow\mathcal{L}(\mathcal{H};\mathcal{H})$
and 

\begin{equation}
||D_{X}\mathcal{U}(X,t)||\leq\frac{\lambda c(1+T)}{\lambda-cT(T+1)}\label{eq:3-8}
\end{equation}
 From the HJB equation we see that $\mathcal{U}(X,t)$ is differentiable
in $t$ and satisfies the equation 

\begin{align}
\frac{\partial\mathcal{U}}{\partial t}-\frac{1}{\lambda}D_{X}\mathcal{U}(X,t)\,\mathcal{U}(X,t)+D_{X}\mathcal{F}(X) & =0\label{eq:3-9}\\
\mathcal{U}(X,T)=D_{X}\mathcal{F}_{T}(X)\nonumber 
\end{align}
 We have the 
\begin{prop}
\label{prop2} We make the assumptions of Theorem\ref{theo2-1} and
(\ref{eq:3-1}). Then equation (\ref{eq:3-9}) has one and only one
solution satisfying the estimates (\ref{eq:3-7}), (\ref{eq:3-8}).\end{prop}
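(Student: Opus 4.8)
The plan is to note that existence with the required bounds is already in hand, and to obtain uniqueness by the method of characteristics attached to the first-order system (\ref{eq:3-9}). Indeed, the function $\mathcal{U}(X,t)=D_{X}V(X,t)$, with $V$ the value function of Theorem~\ref{theo2-1}, satisfies (\ref{eq:3-7}) by (\ref{eq:2-15}) and (\ref{eq:3-8}) by (\ref{eq:3-6}), and it solves (\ref{eq:3-9}) because that equation is precisely the $X$-derivative of the HJB equation (\ref{eq:2-8}); moreover, by (\ref{eq:2-7}) and (\ref{eq:3-1})--(\ref{eq:3-6}), the maps $\mathcal{U}$, $D_{X}\mathcal{U}$ and $\partial_{t}\mathcal{U}$ are continuous. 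So the only thing to prove is uniqueness within the class of solutions obeying (\ref{eq:3-7})--(\ref{eq:3-8}).

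Let $\mathcal{U}$ be such a solution and fix $(X,t)$. By the global Lipschitz bound (\ref{eq:3-8}) and the linear growth (\ref{eq:3-7}), the ordinary differential equation
\begin{equation*}
\frac{d\hat{X}}{ds}=-\frac{1}{\lambda}\,\mathcal{U}(\hat{X}(s),s),\qquad\hat{X}(t)=X,
\end{equation*}
has a unique solution $\hat{X}(\cdot)\in C^{1}([t,T];\mathcal{H})$, defined on the whole interval $[t,T]$ (no finite-time blow-up, by a Gronwall estimate using the linear growth). Set $\hat{Z}(s)=\mathcal{U}(\hat{X}(s),s)$. Differentiating along the characteristic and using that $\mathcal{U}$ solves (\ref{eq:3-9}),
\begin{equation*}
\frac{d\hat{Z}}{ds}=\partial_{t}\mathcal{U}(\hat{X}(s),s)+D_{X}\mathcal{U}(\hat{X}(s),s)\,\frac{d\hat{X}}{ds}=\partial_{t}\mathcal{U}-\frac{1}{\lambda}D_{X}\mathcal{U}\,\mathcal{U}=-D_{X}\mathcal{F}(\hat{X}(s)).
\end{equation*}
Combining this with $\frac{d\hat{X}}{ds}=-\hat{Z}(s)/\lambda$, the initial condition $\hat{X}(t)=X$, and the terminal condition $\hat{Z}(T)=\mathcal{U}(\hat{X}(T),T)=D_{X}\mathcal{F}_{T}(\hat{X}(T))$, we see that $(\hat{X},\hat{Z})$ is a solution of the two-point boundary value problem (\ref{eq:2-11}) with data $(X,t)$.

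I would then invoke the uniqueness for (\ref{eq:2-11}) established in the proof of Theorem~\ref{theo2-1} through the contraction argument for the fixed-point equation (\ref{eq:2-13}): necessarily $\hat{X}(\cdot)\equiv Y_{Xt}(\cdot)$ and $\hat{Z}(\cdot)\equiv Z_{Xt}(\cdot)$. Evaluating at $s=t$ and using (\ref{eq:2-23}) gives $\mathcal{U}(X,t)=\hat{Z}(t)=Z_{Xt}(t)=D_{X}V(X,t)$. Since $(X,t)$ is arbitrary, $\mathcal{U}=D_{X}V$, which is exactly the claimed uniqueness.

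The one point requiring care is the chain rule used to differentiate $s\mapsto\mathcal{U}(\hat{X}(s),s)$: this presupposes that a ``solution'' of (\ref{eq:3-9}) carries enough joint regularity — in practice, $\mathcal{U}$ and $D_{X}\mathcal{U}$ continuous on $\mathcal{H}\times(0,T)$ and $t\mapsto\mathcal{U}(X,t)$ of class $C^{1}$ — so that the composition along the $C^{1}$ curve $\hat{X}(\cdot)$ is again $C^{1}$ with the expected derivative. This is the natural (implicit) notion of solution here, and, as observed above, it holds for $D_{X}V$. Once that convention is fixed I expect no further obstacle, since everything else reduces to the elementary Gronwall-type estimates already used repeatedly in the proof of Theorem~\ref{theo2-1}.
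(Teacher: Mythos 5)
Your proof is correct, but it takes a genuinely different route from the paper's. The paper argues that any solution of (\ref{eq:3-9}) must be a gradient, writes $\mathcal{U}=D_{X}\tilde{V}$, observes that (\ref{eq:3-9}) then says $D_{X}\bigl(\partial_{t}\tilde{V}-\frac{1}{2\lambda}\|D_{X}\tilde{V}\|^{2}+\mathcal{F}\bigr)=0$, so that $\tilde{V}$ solves the HJB equation (\ref{eq:2-8}) up to an additive function of $t$, and concludes by the uniqueness of the HJB solution already proved in Theorem \ref{theo2-1}. You instead integrate (\ref{eq:3-9}) along its characteristics and show that the pair $(\hat{X},\hat{Z})$ solves the two-point boundary value problem (\ref{eq:2-11}), whose uniqueness was established via the contraction for (\ref{eq:2-13}); evaluating at $s=t$ and using (\ref{eq:2-23}) then identifies $\mathcal{U}$ with $D_{X}V$. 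A real advantage of your argument is that it never needs to know in advance that $\mathcal{U}$ is a gradient: the paper's first step rests on the identity $D_{X}\mathcal{U}\,\mathcal{U}=\frac{1}{2}D_{X}\|\mathcal{U}\|^{2}$, which presupposes that $D_{X}\mathcal{U}$ is self-adjoint, i.e.\ essentially the gradient structure it is meant to establish, whereas in your approach the gradient structure comes out as a consequence of $\mathcal{U}=D_{X}V$. The price you pay is the chain rule along the characteristic, which requires joint continuity of $D_{X}\mathcal{U}$ and $\partial_{t}\mathcal{U}$; you flag this honestly, and the paper's notion of solution is no more explicit about regularity (its own step from $D_{X}(\cdot)=0$ to ``function of $t$ alone'' needs comparable smoothness). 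Both proofs ultimately lean on uniqueness results from the proof of Theorem \ref{theo2-1} --- yours on the uniqueness of (\ref{eq:2-11}), the paper's on the uniqueness of (\ref{eq:2-8}) --- so neither is more self-contained; yours is arguably the more robust of the two.
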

\begin{proof}
We have only to prove uniqueness. Noting that 

\[
D_{X}\mathcal{U}(X,t)\,\mathcal{U}(X,t)=\frac{1}{2}D_{X}||\mathcal{U}(X,t)||^{2}
\]
 we see immediately from the equation that $\mathcal{U}(X,t)$ is
a gradient. So $\mathcal{U}(X,t)=D_{X}\tilde{V}(X,t).$ Therefore
(\ref{eq:3-9}) reads 

\begin{align*}
D_{X}(\frac{\partial\tilde{V}}{\partial t}-\frac{1}{2\lambda}||D_{X}\tilde{V}||^{2}+\mathcal{F}(X)) & =0\\
D_{X}\tilde{V}(X,T)=D_{X}\mathcal{F}_{T}(X)
\end{align*}
 We thus can write 

\begin{align*}
\frac{\partial\tilde{V}}{\partial t}-\frac{1}{2\lambda}||D_{X}\tilde{V}||^{2}+\mathcal{F}(X) & =f(t)\\
\tilde{V}(X,T)=D_{X}\mathcal{F}_{T}(X)+h
\end{align*}
where $f(t)$ is purely function of $t$ and $h$ is a constant. If
we introduce the function $\varphi(t)$ solution of 

\[
\frac{\partial\varphi}{\partial t}=f(t),\quad\varphi(T)=h
\]
 the function $\tilde{V(}X,t)-\varphi(t)$ is solution of the HJB
equation (\ref{eq:2-8}) and satisfies the regularity properties of
Theorem \ref{theo2-1}. From the uniqueness of the solution of the
HJB equation we have $\tilde{V(}X,t)-\varphi(t)=V(X,t)$ the value
function, hence $\mathcal{U}(X,t)==D_{X}V(X,t),$ which proves the
uniqueness.$\blacksquare$
\end{proof}

\section{FUNCTIONALS ON PROBABILITY MEASURES}

\subsection{GENERAL COMMENTS}

If we have a functional on probability measures, the idea , introduced
by P.L. Lions \cite{PLL}, \cite{PL2} is to consider it as a functional
on random variables, whose probability laws are the probability measures.
Nevertheless, it is possible to work with the space of probability
measures directly, which is a metric space. The key issue is to define
the concept of gradient. For the space of probability measures, it
is the Wasserstein gradient. We shall see that, in fact, it is equivalent
to the gradient in the sence of the Hilbert space of random variables.

\subsection{WASSERSTEIN GRADIENT}

We consider the space $\mathcal{P}_{2}(R^{n})$ of probability measures
on $R^{n}$ , with second order moments, equipped with the Wasserstein
metric $W_{2}(\mu,\nu),$defined by 

\begin{equation}
W_{2}^{2}(\mu,\nu)=\inf_{\gamma\in\Gamma(\mu,\nu)}\int_{R^{n}\times R^{n}}|\xi-\eta|^{2}\gamma(d\xi,d\eta)\label{eq:4-1}
\end{equation}
where $\Gamma(\mu,\nu)$ denotes the set of joint probability measures
on $R^{n}\times R^{n}$ such that the marginals are $\mu$ and $\nu$
respectively. It is useful to consider a probability space $\Omega,$$\mathcal{A},P$
and random variables in $\mathcal{H=}$$L^{2}(\Omega,\mathcal{A},P;R^{n}).$
We then can write $\mu=\mathcal{L}_{X}$ and 

\[
W_{2}^{2}(\mu,\nu)=\inf_{\begin{array}{c}
X,Y\in\mathcal{H}\\
\mathcal{L}_{X}=\mu\\
\mathcal{L}_{Y}=\nu
\end{array}}E|X-Y|^{2}
\]
 When the probability law has a density with respect to Lebesgue measure,
say $m(x)$ belonging to $L^{1}(R^{n})$ and positive, we replace
the law by its density. Note that $\int|x|^{2}m(x)dx$ $<+\infty.$
We call $L_{m}^{2}(R^{n};R^{n})$ the space of functions $f:\,R^{n}\rightarrow R^{n}$
such that $\int_{R^{n}}|f(x)|^{2}m(x)dx<+\infty.$ We consider functionals
$F(\mu)$ on $\mathcal{P}_{2}(R^{n}).$ If $\mu$ has a density $m$
we write $F(m).$ If $m\in L^{2}(R^{n})$, we say that $F(m)$ has
a Gâteaux differential at $m$, denoted by $\dfrac{\partial F(m)}{\partial m}(x)$
if we have 

\begin{equation}
\lim_{\theta\rightarrow0}\dfrac{F(m+\theta\mu)-F(m)}{\theta}=\int_{R^{n}}\dfrac{\partial F(m)}{\partial m}(x)\mu(x)dx,\,\forall\mu\in L^{2}(R^{n})\label{eq:4-2}
\end{equation}
and $\dfrac{\partial F(m)}{\partial m}(x)\in L^{2}(R^{n}).$ For probability
densities, we shall extend this concept as follows. We say that $\dfrac{\partial F(m)}{\partial m}(x)\in L_{m}^{1}(R^{n})$
is the \emph{functional derivative} of $F$ at $m$ if for any sequence
of probability densities $m_{\epsilon}$ in $\mathcal{P}_{2}(R^{n})$
such that $W_{2}(m_{\epsilon},m)\rightarrow0$ then $\dfrac{\partial F(m)}{\partial m}(.)\in$$L_{m_{\epsilon}}^{1}(R^{n})$
and 

\begin{equation}
\frac{F(m_{\epsilon})-F(m)-\int_{R^{n}}\dfrac{\partial F(m)}{\partial m}(x)(m_{\epsilon}(x)-m(x))dx}{W_{2}(m_{\epsilon},m)}\rightarrow0,\:\text{as}\:\epsilon\rightarrow0\label{eq:4-22}
\end{equation}

The function $\dfrac{\partial F(m)}{\partial m}(.)$ is called the
\emph{functional derivative }of $F(m)$ at point $m.$ Let us see
the connection with the concept of Wasserstein gradient on the metric
space $\mathcal{P}_{2}(R^{n})$ . We shall simply give the definition
and the expression of the gradient. For a detailed theory, we refer
to Otto \cite{OTT}, Ambrosio- Gigli- Savaré \cite{AGS}, Benamou-Brenier
\cite{BEB}, Brenier \cite{BRE}, Jordan-Kinderlehrer-Otto \cite{JKO},
Otto \cite{OTT}, Villani \cite{VIL}. 

The first concept is that of optimal transport map, also called Brenier's
map. Given a probability $\nu\:\in\mathcal{P}_{2}(R^{n}),$the Monge
problem 

\[
\inf_{\{T(.)|\,T(.)m=\nu\}}\int_{R^{n}}|x-T(x)|^{2}m(x)dx
\]
 has a unique solution which is a gradient $T(x)=D\Phi(x).$ The notation
$T(.)m=\nu$ means that $\nu$ is the image of the probability whose
density is $m.$ The optimal solution is the Brenier's map. It is
noted $T_{m}^{\nu}$. We do not necessarily assume that $\nu$ has
a density. The following property holds 

\begin{equation}
W_{2}^{2}(m,\nu)=\int_{R^{n}}|x-D\Phi(x)|^{2}m(x)dx\label{eq:4-3}
\end{equation}
This motivates the definition of tangent space $\mathcal{T}(m)$ of
the metric space $\mathcal{P}_{2}(R^{n})$ at point $m$ as 

\[
\mathcal{T}(m)=\overline{\{D\Phi|\Phi\in C_{c}^{\infty}(R^{n})\}}
\]
We next consider curves on $\mathcal{P}_{2}(R^{n}),$ defined by densities
$m(t)\equiv m(t)(x)=m(x,t)$. The evolution of $m(t)$ is defined
by a velocity vector field $v(t)\equiv v(t)(x)=v(x,t)$ if $m(x,t)$
is the solution of the continuity equation 

\begin{align}
\frac{\partial m}{\partial t}+\text{div }(v(x,t)m(x,t)) & =0\label{eq:4-4}\\
m(x,0)=m(x)\nonumber 
\end{align}
We can interpret this equation in the sense of distributions, and
it is sufficient to assume that $\int_{0}^{T}\int_{R^{n}}|v(x,t)|^{2}m(x,t)dxdt<+\infty,\,\forall T<+\infty.$
This evolution model has a broad sprectrum and turns out to be equivalent
to the property that $m(t)$ is absolutely continuous in the sense 

\[
W_{2}(m(s),m(t))\leq\int_{s}^{t}\rho(\sigma)d\sigma,\,\forall s<t
\]
 with $\rho(.)$ locally $L^{2}.$ Now, for a given absolutely continuous
curve $m(t),$ the corresponding velocity field is not necessarily
unique. We can define the velocity field with minimum norm, i.e. $\hat{v}(x,t)$
solution of 

\begin{equation}
\inf\left\{ \int_{0}^{T}\int_{R^{n}}|v(x,t)|^{2}m(x,t)dxdt\,\left|\begin{array}{cc}
\frac{\partial m}{\partial t}+\text{div }(v(x,t)m(x,t)) & =0\end{array}\right.\right\} \label{eq:4-5}
\end{equation}
The Euler equation for this minimization problem is 

\[
\int_{0}^{T}\int_{R^{n}}\hat{v}(x,t).v(x,t)m(x,t)dt=0,\:\forall v(x,t)|\text{div }(v(x,t)m(x,t))=0\,\text{a.e. }
\]
which implies immediately that $\hat{v}(t)\in\mathcal{T}(m(t))$ a.e.
$t.$ Consequently, to a given absolutely continuous curve $m(t)$
we can associate a unique velocity field $\hat{v}(t)$ in the tangent
space $\mathcal{T}(m(t))$ a.e. $t.$ It is called the \uline{tangent
vector field }to the curve $m(t).$ It can be expressed by the following
formula

\begin{equation}
\hat{v}(x,t)=\lim_{\epsilon\rightarrow0}\frac{T_{m(t)}^{m(t+\epsilon)}(x)-x}{\epsilon}\label{eq:4-6}
\end{equation}
the limit being understood in $L_{m(t)}^{2}(R^{n};R^{n}).$ The function
$T_{m(t)}^{m(t+\epsilon)}(x)$ is uniquely defined. Since by (4.3)
, $||T_{m(t)}^{m(t+\epsilon)}(x)-x||_{L_{m(t)}^{2}}=W_{2}(m(t),m(t+\epsilon)),$
we see that. for any absolutely continuous curve 

\begin{equation}
W_{2}(m(t),m(t+\epsilon))\leq C(t)\epsilon\label{eq:4-60}
\end{equation}
In the definition of the functional derivative, see (\ref{eq:4-22})
we can write 

\begin{equation}
\frac{F(m_{\epsilon})-F(m)-\int_{R^{n}}\dfrac{\partial F(m)}{\partial m}(x)(m_{\epsilon}(x)-m(x))dx}{\epsilon}\rightarrow0,\:\text{as}\:\epsilon\rightarrow0\label{eq:4-22-1}
\end{equation}
provided the sequence $m_{\epsilon}$ is absolutely continuous. 

Suppose that we consider the curve corrresponding to a gradient $D\Phi(x)$
where $\Phi(x)$ is smooth with compact support, i.e the curve $m(t)$
is defined by 

\begin{align}
\frac{\partial m}{\partial t}+\text{div }(D\Phi(x)m(x,t)) & =0\label{eq:4-7}\\
m(x,0)=m(x)\nonumber 
\end{align}
Since it is a gradient, $D\Phi(x)$ has minimal norm and we can claim
from (\ref{eq:4-6}) that 

\begin{equation}
D\Phi(x)=\lim_{\epsilon\rightarrow0}\frac{T_{m}^{m(\epsilon)}(x)-x}{\epsilon}\:\text{in}\:L_{m}^{2}(R^{n};R^{n})\label{eq:4-8}
\end{equation}
We consider now a functional $F(m)$ on $\mathcal{P}_{2}(R^{n})$,
and limit ourselves to densities. We say that $F(m)$ is differentiable
at $m$ if there exists a function $\Gamma(x,m)$ belonging to the
tangent space $\mathcal{T}(m)$ with the property 

\begin{equation}
\frac{F(m(\epsilon))-F(m)-\int_{R^{n}}\Gamma(x,m).(T_{m}^{m(\epsilon)}(x)-x)m(x)dx}{W_{2}(m,m(\epsilon))}\rightarrow0,\,\text{as }\epsilon\rightarrow0\label{eq:4-9}
\end{equation}

We recall that , see ( \ref{eq:4-3}) $W_{2}(m,m(\epsilon))=||T_{m}^{m(\epsilon)}(x)-x||_{L_{m}^{2}}.$
The function $\Gamma(x,m)$ is called the Wasserstein gradient and
denoted $\nabla F_{m}(m)(x).$ If we apply this property to the map
$m(t)$ defined by (\ref{eq:4-7}), this is equivalent to 

\[
\dfrac{F(m(\epsilon))-F(m)}{\epsilon}\rightarrow\int_{R^{n}}\Gamma(x,m).D\Phi(x)m(x)dx
\]

From the continuity equation (\ref{eq:4-7}), using the regularity
of $\Phi,$ we can state that

\[
\dfrac{m(x,\epsilon)-m(x)}{\epsilon}\rightarrow-\text{div }(D\Phi(x)m(x)),\:\text{as }\epsilon\rightarrow0,\:\text{in the sense of distributions}
\]
If $F(m)$ has a functional derivative we obtain 

\[
\dfrac{F(m(\epsilon))-F(m)}{\epsilon}\rightarrow-\int_{R^{n}}\frac{\partial F(m)}{\partial m}(x)\text{div }(D\Phi(x)m(x))dx
\]
Therefore we obtain 

\begin{align*}
\int_{R^{n}}\Gamma(x,m).D\Phi(x)m(x)dx & =-\int_{R^{n}}\frac{\partial F(m)}{\partial m}(x)\text{div }(D\Phi(x)m(x))dx\\
 & =\int_{R^{n}}D\frac{\partial F(m)}{\partial m}(x).D\Phi(x)m(x)dx
\end{align*}
 If we assume that $D\frac{\partial F(m)}{\partial m}(x)\in L_{m}^{2}(R^{n};R^{n}),$
we can replace $D\Phi(x)$ by any element of $\mathcal{T}(m)$. Since
$\Gamma(x,m)$ and $D\frac{\partial F(m)}{\partial m}(x)$ belong
to $\mathcal{T}(m),$ it follows that 

\begin{equation}
\nabla F_{m}(m)(x)=D\frac{\partial F(m)}{\partial m}(x)\label{eq:4-10}
\end{equation}
So the Wasserstein gradient is simply the gradient of the functional
derivative. 
\begin{rem}
\label{rem3-1} The concept of functional derivative, defined in (\ref{eq:4-22})
uses a sequence of probability densities $m_{\epsilon}\rightarrow m,$
so it is not equivalent to the concept of Gâteaux differential in
the space $L^{2}(R^{n}),$ which requires to remove the assumptions
of positivity and $\int_{R^{n}}m(x)dx=1.$ We will develop the differences
in examples in which explicit formulas are available, see section
\ref{sec:QUADRATIC-CASE}.
\end{rem}

\subsection{\label{sub:GRADIENT-IN-THE}GRADIENT IN THE HILBERT SPACE $\mathcal{H}$.}

The functional $F(m)$ can now be written as a functional $\mathcal{F}(X)$
on $\mathcal{H}$ , with $m=\mathcal{L}_{X}.$ We assume that random
variables with densities form a dense subspace of $\mathcal{H}.$
Consider a random variable $Y\in\mathcal{H}$ and let $\pi(x,y)$
be the joint probability density on $R^{n}\times R^{n}$ of the pair
$(X,Y).$ So $m(x)=\int_{R^{n}}\pi(x,y)dy.$ Consider then the random
variable $X+tY.$ Its probability density is given by 

\[
m(x,t)=\int_{R^{n}}\pi(x-ty,y)dy
\]
 and it sastisfies the continuity equation 

\[
\frac{\partial m}{\partial t}=-\text{div }(\int_{R^{n}}\pi(x-ty,y)ydy)
\]
We have $\mathcal{F}(X+tY)=F(m(t)).$ Next 

\[
\lim_{t\rightarrow0}\frac{\mathcal{F}(X+tY)-\mathcal{F}(X)}{t}=((D_{X}\mathcal{F}(X),Y))
\]
 and 

\begin{align*}
\lim_{t\rightarrow}\frac{F(m(t))-F(m)}{t} & =-\int_{R^{n}}\frac{\partial F(m)}{\partial m}(x)\text{div }(\int_{R^{n}}\pi(x,y)ydy)dx\\
 & =\int_{R^{n}}D\frac{\partial F(m)}{\partial m}(x).(\int_{R^{n}}\pi(x,y)ydy)dx\\
 & =((D\frac{\partial F(m)}{\partial m}(X),Y))
\end{align*}
 Thus necessarily 

\begin{equation}
D\frac{\partial F(m)}{\partial m}(X)=\nabla F_{m}(m)(X)=D_{X}\mathcal{F}(X)\label{eq:4-11}
\end{equation}
So, the gradient in $\mathcal{H}$ reduces to the Wasserstein gradient,
in which the argument is replaced with the random variable. In the
sequel, we will use the gradient in $\mathcal{H}$.

\section{MEAN FIELD TYPE CONTROL PROBLEM}

\subsection{PRELIMINARIES}

Consider a function $f(x,m)$ defined on $R^{n}\times\mathcal{P}_{2}(R^{n})$.
As usual we consider only $m$ which are densities of probability
measures, and use also the notation $f(x,\mathcal{L}_{X}).$ We then
define $\mathcal{F}(X)=Ef(X,\mathcal{L}_{X}).$ This implies 

\begin{equation}
\mathcal{F}(X)=\Phi(m)=\int_{R^{n}}f(x,m)m(x)dx\label{eq:5-1}
\end{equation}
We next consider the functional derivative 

\begin{equation}
\frac{\partial\Phi(m)}{\partial m}(x)=F(x,m)=f(x,m)+\int_{R^{n}}\frac{\partial f}{\partial m}(\xi,m)(x)m(\xi)d\xi\label{eq:5-2}
\end{equation}
and we have 

\begin{equation}
D_{X}\mathcal{F}(X)=D_{x}F(X,\mathcal{L}_{X})\label{eq:5-3}
\end{equation}
We make the assumptions 

\begin{equation}
|D_{x}F(x,m)|\leq\dfrac{c}{2}(1+|x|+(\int|\xi|^{2}m(\xi)d\xi)^{\frac{1}{2}})\label{eq:5-4}
\end{equation}

\begin{equation}
|D_{x}F(x_{1},m_{1})-D_{x}F(x_{2},m_{2})|\leq\dfrac{c}{2}(|x_{1}-x_{2}|+W_{2}(m_{1},m_{2}))\label{eq:5-5}
\end{equation}
which implies immediately the properties (\ref{eq:2-1}), (\ref{eq:2-1-1}).

\subsection{EXAMPLES}

We consider first quadratic functionals. We use the notation $\bar{x}=\int_{R^{n}}xm(x)dx.$
We then consider 

\begin{equation}
f(x,m)=\frac{1}{2}(x-S\bar{x})^{*}\bar{Q}(x-S\bar{x})+\frac{1}{2}x^{*}Qx\label{eq:5-53}
\end{equation}
then assuming that $\int_{R^{n}}m(x)dx=1,$ i.e. $m$ is a probability
density we have 

\begin{equation}
F(x,m)=\frac{1}{2}x^{*}(Q+\bar{Q})x+\frac{1}{2}\bar{x}^{*}S^{*}\bar{Q}S\bar{x}-\bar{x}^{*}(\bar{Q}S+S^{*}\bar{Q}-S^{*}\bar{Q}S)x\label{eq:5-54}
\end{equation}

\begin{equation}
D_{x}F(x,m)=(Q+\bar{Q})x-(\bar{Q}S+S^{*}\bar{Q}-S^{*}\bar{Q}S)\bar{x}\label{eq:5-55}
\end{equation}

We see easily that assumptions (\ref{eq:5-4}),(\ref{eq:5-5}) are
satisfied.

We can give an additonal example

\begin{equation}
f(x,m)=\frac{1}{2}\int_{R^{n}}K(x,\xi)m(\xi)d\xi\label{eq:5-6}
\end{equation}

with $K(x,\xi)=K(\xi,x)$ and 

\begin{equation}
|K(x_{1},\xi_{1})-K(x_{2},\xi_{2})|\leq C(1+|x_{1}|+|x_{2}|+|\xi_{1}|+|\xi_{2}|)(|x_{1}-x_{2}|+|\xi_{1}-\xi_{2}|)\label{eq:5-67}
\end{equation}

\begin{align}
|D_{x}K(x_{1},\xi_{1})-D_{x}K(x_{2},\xi_{2})| & \leq\dfrac{c}{2}(|x_{1}-x_{2}|+|\xi_{1}-\xi_{2}|)\label{eq:5-7}\\
|D_{x}K(0,0)| & \leq\dfrac{c}{2}\nonumber 
\end{align}

We have 

\[
\frac{\partial f}{\partial m}(\xi,m)(x)=\frac{1}{2}K(\xi,x)=\frac{1}{2}K(x,\xi)
\]

hence $\int_{R^{n}}\frac{\partial f}{\partial m}(\xi,m)(x)m(\xi)d\xi=f(x,m)$
which implies 

\begin{equation}
F(x,m)=2f(x,m)=\int_{R^{n}}K(x,\xi)m(\xi)d\xi\label{eq:5-7-1}
\end{equation}

We thus have 

\begin{align*}
|D_{x}F(x,m)|| & \leq\int_{R^{n}}|D_{x}K(x,\xi)|m(\xi)d\xi\leq\\
 & \leq\dfrac{c}{2}(1+|x|+\int|\xi|m(\xi)d\xi)\leq\\
 & \leq\dfrac{c}{2}(1+|x|+(\int|\xi|^{2}m(\xi)d\xi)^{\frac{1}{2}})
\end{align*}
 If we take 2 densities $m_{1},m_{2},$ we may consider 2 random variables
$\Xi_{1},\Xi_{2}$ with the probabilities $m_{1},m_{2}.$ Therefore 

\begin{align*}
|D_{x}F(x_{1},m_{1})-D_{x}F(x_{2},m_{2})| & \leq|\int D_{x}(K(x_{1},\xi)-K(x_{2},\xi))m_{1}(\xi)d\xi||+\\
+ & |E\:D_{x}(K(x_{2},\Xi_{1})-K(x_{2},\Xi_{2}))|\leq\\
\dfrac{c}{2}|x_{1}-x_{2}| & +\dfrac{c}{2}\sqrt{E|\Xi_{1}-\Xi_{2}|^{2}}
\end{align*}
 and since $\Xi_{1},\Xi_{2}$ are arbitrary, with marginals $m_{1},m_{2}$we
can write (\ref{eq:5-5}). In the sequel we also consider a functional
$h(x,m)$ with exactly the same properties as $f$ and write 

\begin{align}
F_{T}(x,m) & =h(x,m)+\int_{R^{n}}\frac{\partial h}{\partial m}(\xi,m)(x)m(\xi)d\xi\label{eq:5-8}\\
\mathcal{F}_{T}(X) & =\int_{R^{n}}h(x,m)m(x)dx,\;D_{X}\mathcal{F}_{T}(X)=D_{x}F_{T}(X,\mathcal{L}_{X})\nonumber 
\end{align}

\subsection{\label{sub:MEAN-FIELD-TYPE}MEAN FIELD TYPE CONTROL PROBLEM}

We can formulate the following mean field type control problem. Let
us consider a dynamical system in $R^{n}$

\begin{align}
\frac{dx}{ds} & =v(x(s),s)\label{eq:5-9}\\
x(t) & =\xi\nonumber 
\end{align}
where $v(x,s)$ is a feedback to be optimized. The initial condition
is a random variable with probability density $m(x).$The Fokker-Planck
equation of the evolution of the density is 

\begin{align}
\frac{\partial m}{\partial s}+\text{div}(v(x)m) & =0\label{eq:5-10}\\
m(x,t)=m(x)\nonumber 
\end{align}
We denote the solution by $m_{v(.)}(x,s)$. Similarly we call the
solution of (\ref{eq:5-9}) $x(s;v(.)).$ We then consider the cost
functional 

\begin{align}
J_{m,t}(v(.)) & =\frac{\lambda}{2}\int_{t}^{T}\int_{R^{n}}|v(x,s)|^{2}m_{v(.)}(x,s)dxds+\int_{t}^{T}\int_{R^{n}}m_{v(.)}(x,s)f(x,m_{v(.)}(s))dxds+\label{eq:5-11}\\
 & +\int_{R^{n}}m_{v(.)}(x,T)h(x,m_{v(.)}(T))dx\nonumber 
\end{align}
which is equivalent to the expression 

\begin{equation}
J_{m,t}(v(.))=\frac{\lambda}{2}\int_{t}^{T}E|v(x(s;v(.)))|^{2}ds+\int_{t}^{T}Ef(x(s;v(.)),\mathcal{L}_{x(s)})ds+Eh(x(T;v(.)),\mathcal{L}_{x(T)})\label{eq:5-12}
\end{equation}
This is a standard mean field type control problem, not a mean field
game. In \cite{BFY} we have associated to it a coupled system of
HJB and FP equations, see p. 18, which reads here 

\begin{align}
-\frac{\partial u}{\partial s}+\frac{1}{2\lambda}|Du|^{2} & =F(x,m(s))\label{eq:5-13}\\
u(x,T) & =F_{T}(x,m(s))\nonumber \\
\frac{\partial m}{\partial s}-\frac{1}{\lambda}\text{div} & (Du\,m)=0\nonumber \\
m(x,t) & =m(x)\nonumber 
\end{align}
 This system expresses a necessary condition of optimality. The function
$u(x,t)$ is not a value function, but an adjoint variable to the
optimal state, which is $m(x,s)$. The optimal feedback is given by 

\begin{equation}
\hat{v}(x,s)=-\frac{1}{\lambda}Du(x,s)\label{eq:5-14}
\end{equation}
We proceed formally, although we shall be able to give an explicit
solution of this system. If $\hat{v}(x,s)$ is the optimal feedback,
then the value function $V(m,t)=J_{m,t}(\hat{v}(.))$ is given by 

\begin{align}
V(m,t) & =\frac{1}{2\lambda}\int_{t}^{T}\int_{R^{n}}m(x,s)|Du(x,s)|^{2}dxds+\int_{t}^{T}\int_{R^{n}}m(x,s)f(x,m(s))dxds+\label{eq:5-15}\\
+ & \int_{R^{n}}m(x,T)h(x,m(T))dx\nonumber 
\end{align}
The value function is solution of Bellman equation, see \cite{BFY1},
\cite{LPI}, written formally (it will be justified later)

\begin{align}
\frac{\partial V}{\partial t}-\frac{1}{2\lambda}\int_{R^{n}}|D_{\xi}\frac{\partial V(m,t)}{\partial m}(\xi)|^{2}m(\xi)d\xi+\int_{R^{n}}f(\xi,m)m(\xi)d\xi & =0\label{eq:5-16}\\
V(m,T)=\int_{R^{n}}h(\xi,m)m(\xi)d\xi\nonumber 
\end{align}

\subsection{SCALAR MASTER EQUATION}

We derive the master equation, by considering the function 

\[
U(x,m,t)=\frac{\partial V(m,t)}{\partial m}(x)
\]
and we note that 

\[
\frac{\partial U}{\partial m}(x,m,t)(\xi)=\frac{\partial^{2}V(m,t)}{\partial m^{2}}(x,\xi)
\]
therefore the function is symmetric in $x,\xi$ which means

\[
\frac{\partial U}{\partial m}(x,m,t)(\xi)=\frac{\partial U}{\partial m}(\xi,m,t)(x)
\]
By differentiating (\ref{eq:5-16}) in $m$, and using the symmetry
property, we obtain the equation 

\begin{align}
\frac{\partial U}{\partial t}-\frac{1}{\lambda}\int_{R^{n}}D_{\xi}\frac{\partial U}{\partial m}(x,m,t)(\xi).D_{\xi}U(\xi,m,t)m(\xi)d\xi-\label{eq:5-17}\\
-\frac{1}{2\lambda}|D_{x}U(x,m,t)|^{2}+F(x,m)=0\nonumber \\
U(x,m,T)=F_{T}(x,m)\nonumber 
\end{align}
This function allows to uncouple the system of HJB-FP equations, given
in (\ref{eq:5-13}). Indeed, we first solve the FP equation, replacing
$u$ by $U,$ i.e. 

\begin{align}
\frac{\partial m}{\partial s}-\frac{1}{\lambda}\text{div}(DU\,m) & =0\label{eq:5-18}\\
m(x,t) & =m(x)\nonumber 
\end{align}
 then $u(x,s)=U(x,m(s),s)$ is solution of the HJB equation (\ref{eq:5-13}),
as easily checked. In particular , we have 

\begin{equation}
u(x,t)=U(x,m,t)\label{eq:5-19}
\end{equation}

\subsection{VECTOR MASTER EQUATION}

We next consider $\mathcal{U}(x,m,t)=D_{x}U(x,m,t).$ Differentiating
(\ref{eq:5-17}) we can write 

\begin{equation}
\frac{\partial\mathcal{U}}{\partial t}-\frac{1}{\lambda}\int_{R^{n}}D_{\xi}\frac{\partial\mathcal{U}}{\partial m}(x,m,t)(\xi)\,\mathcal{U}(\xi,m,t)m(\xi)d\xi-\frac{1}{\lambda}D_{x}\mathcal{U}(x,m,t)\,\mathcal{U}(x,m,t)+D_{x}F(x.m)=0\label{eq:5-20}
\end{equation}
\[
\mathcal{U}(x,m,T)=D_{x}F_{T}(x.m)
\]

\section{CONTROL PROBLEM IN THE SPACE $\mathcal{H}$.}

\subsection{FORMULATION}

If we set 

\begin{align}
\mathcal{F}(X) & =Ef(X,\mathcal{L}_{X})=\int f(x,m)m(x)dx\label{eq:6-1}\\
\mathcal{F}_{T}(X) & =Eh(X,\mathcal{L}_{X})=\int h(x,m)m(x)dx\nonumber 
\end{align}

\begin{align}
F(x,m) & =f(x,m)+\int\frac{\partial f(\xi,m)}{\partial m}(x)m(\xi)d\xi\label{eq:6-2}\\
F_{T}(x,m) & =h(x,m)+\int\frac{\partial h(\xi,m)}{\partial m}(x)m(\xi)d\xi\nonumber 
\end{align}
We have 

\begin{align}
D_{X}\mathcal{F}(X) & =D_{x}F(X,\mathcal{L}_{X})\label{eq:6-3}\\
D_{X}\mathcal{F}_{T}(X) & =D_{x}F_{T}(X,\mathcal{L}_{X})\nonumber 
\end{align}
We assume that 

\begin{align}
|D_{x}F(x_{1},m_{1})-D_{x}F(x_{2},m_{2})| & \leq\frac{c}{2}(|x_{1}-x_{2}|+W_{2}(m_{1},m_{2}))\label{eq:6-4}\\
|D_{x}F_{T}(x_{1},m_{1})-D_{x}F_{T}(x_{2},m_{2})| & \leq\frac{c}{2}(|x_{1}-x_{2}|+W_{2}(m_{1},m_{2}))\nonumber 
\end{align}

\begin{align}
|D_{x}F(x,m)| & \leq\dfrac{c}{2}(1+|x|+\sqrt{\int_{R^{n}}|\xi|^{2}m(\xi)d\xi})\label{eq:6-4-1}\\
|D_{x}F_{T}(x,m)| & \leq\dfrac{c}{2}(1+|x|+\sqrt{\int_{R^{n}}|\xi|^{2}m(\xi)d\xi})\nonumber 
\end{align}

It follows that 

\begin{align*}
||D_{X}\mathcal{F}(X_{1})-D_{X}\mathcal{F}(X_{2})|| & \leq||D_{x}F(X_{1},\mathcal{L}_{X_{1}})-D_{x}F(X_{2},\mathcal{L}_{X_{1}})||+\\
+ & ||D_{x}F(X_{2},\mathcal{L}_{X_{1}})-D_{x}F(X_{2},\mathcal{L}_{X_{2}})||
\end{align*}

\[
\leq c||X_{1}-X_{2}||
\]
 and similar estimate for $\mathcal{F}_{T}.$Therefore the set up
of section \ref{sub:SETTING-OF-THE} is satisfied. We can reinterpret
the problem (\ref{eq:5-9}), (\ref{eq:5-12}) or (\ref{eq:5-10}),
(\ref{eq:5-11}) as (\ref{eq:2-2}), (\ref{eq:2-3}) which has been
completely solved in Theorem \ref{theo2-1}. We shall study the solution
of the abstract setting. Of course, the initial state $X$ has probability
law $\mathcal{L}_{X}=m.$

\subsection{\label{sub:INTERPRETATION-OF-THE}INTERPRETATION OF THE SOLUTION }

The key point of the proof of Theorem \ref{theo2-1} is the study
of the system (\ref{eq:2-11}) which has one and only one solution.
We proceed formally. Consider the HJB-FP system (\ref{eq:5-13}).
The initial conditions are the pair $(m,t),$ so we can write the
solution as $u_{m,t}(x,s)$, $m_{m,t}(x,s).$ We introduce the differential
equation 

\begin{align}
\frac{dy}{ds} & =-\frac{1}{\lambda}Du(y(s),s)\label{eq:6-5}\\
y(t) & =x\nonumber 
\end{align}
The solution ( if it exists) can be written $y_{xmt}(s).$Now let
us set $z_{xmt}(s)=Du_{mt}(y_{xmt}(s),s).$ Differentiating the HJB
equation (\ref{eq:5-13}) and computing the derivative $\dfrac{dz}{ds}$
we obtain 

\begin{align}
-\dfrac{dz}{ds} & =D_{x}F(y(s),m(s))\label{eq:6-6}\\
z(T) & =D_{x}F_{T}(y(T),m(T))\nonumber 
\end{align}
Now, from the definition of $m(s)$ solution of the FP equation, we
can write 

\begin{equation}
m(s)=y_{mt}(s)(.)(m)\label{eq:6-7}
\end{equation}
in which we have used the notation $y_{mt}(s)(x)=y_{mt}(x,s)=y_{xmt}(s)$
and $y_{mt}(s)(.)(m)$ means the image measure of $m$ by the map
$y_{mt}(s)(.).$ So we can write the system (\ref{eq:6-5}), (\ref{eq:6-6})
as 

\begin{align}
\frac{d^{2}y}{ds^{2}} & =\frac{1}{\lambda}D_{x}F(y(s),y(s)(.)(m))\label{eq:6-8}\\
y(t)=x & \;\frac{dy}{ds}(T)=-\frac{1}{\lambda}D_{x}F_{T}(y(T),y(T)(.)(m))\nonumber 
\end{align}
 This is also written in integral form 

\begin{align}
y(s) & =x-\frac{s-t}{\lambda}D_{x}F_{T}(y(T),y(T)(.)(m))\label{eq:6-9}\\
- & \frac{1}{\lambda}\int_{t}^{T}D_{x}F(y(\sigma),y(\sigma)(.)(m))(s\wedge\sigma-t)d\sigma\nonumber 
\end{align}
Now if we take $y_{X,\mathcal{L}_{X},t}(s),$ then $y(s)(.)(\mathcal{L}_{X})=\mathcal{L}_{y(s)}$
. Writing $y_{X,\mathcal{L}_{X},t}(s)=Y(s)$ to emphasize that we
are dealing with a random variable, we can write (\ref{eq:6-9}) as 

\begin{align}
Y(s) & =X-\frac{s-t}{\lambda}D_{x}F_{T}(Y(T),\mathcal{L}_{Y(T)})-\label{eq:6-10}\\
- & \frac{1}{\lambda}\int_{t}^{T}D_{x}F(Y(\sigma),\mathcal{L}_{Y(\sigma)})(s\wedge\sigma-t)d\sigma\nonumber 
\end{align}
 which is nothing else than (\ref{eq:2-3}) recalling the values of
$D_{X}\mathcal{F}(X),\:D_{X}\mathcal{F}_{T}(X)$, cf (\ref{eq:6-3}).
We know from Theorem \ref{theo2-1}that (\ref{eq:6-10}) has one and
only one solution in $C^{0}([t,T];\mathcal{H})$ and in fact in $C^{2}([t,T];\mathcal{H}).$
This result, of course, does not allow to go from (\ref{eq:6-10})
to (\ref{eq:6-9}), but it easy to mimic the proof. We state the result
in the following 
\begin{prop}
\label{prop3}We assume (\ref{eq:6-4}),(\ref{eq:6-4-1}) and condition
(\ref{eq:2-5}). For given $m,t$ there exists one and only one solution
$y_{mt}(x,s)$ of (\ref{eq:6-9}) in the space $C(t,T;L_{m}^{2}(R^{n};R^{n})).$ \end{prop}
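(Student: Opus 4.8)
The plan is to mimic the fixed-point argument used for equation~(\ref{eq:2-13}) in the proof of Theorem~\ref{theo2-1}, but now carried out in the Banach space $C(t,T;L_{m}^{2}(R^{n};R^{n}))$ instead of $C^{0}([t,T];\mathcal{H})$. First I would define the map $\mathcal{K}$ on $C(t,T;L_{m}^{2}(R^{n};R^{n}))$ by letting $\mathcal{K}(y)(\cdot,s)$ be the right-hand side of~(\ref{eq:6-9}), where in the integrand the measure $y(\sigma)(\cdot)(m)$ is the pushforward of $m$ under the map $x\mapsto y(x,\sigma)$, and similarly for the terminal term. The first thing to check is that $\mathcal{K}$ is well defined: if $y\in C(t,T;L_{m}^{2})$ then $y(\sigma)(\cdot)(m)\in\mathcal{P}_{2}(R^{n})$ with second moment controlled by $\int|y(x,\sigma)|^{2}m(x)dx$, so by the growth bound~(\ref{eq:6-4-1}) the function $x\mapsto D_{x}F(y(x,\sigma),y(\sigma)(\cdot)(m))$ lies in $L_{m}^{2}$ with norm $\le \frac{c}{2}(1+\|y(\cdot,\sigma)\|_{L_m^2}+\text{(second moment)}^{1/2})$, hence $\mathcal{K}(y)\in C(t,T;L_{m}^{2})$.

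The heart of the argument is the contraction estimate. Given $y_{1},y_{2}\in C(t,T;L_{m}^{2})$, I would estimate $\|\mathcal{K}(y_{1})(\cdot,s)-\mathcal{K}(y_{2})(\cdot,s)\|_{L_m^2}$ using the Lipschitz bound~(\ref{eq:6-4}). The key observation is that, because the two measures $y_{i}(\sigma)(\cdot)(m)$ are pushforwards of the \emph{same} $m$ under the maps $y_{i}(\cdot,\sigma)$, the coupling $(y_{1}(X,\sigma),y_{2}(X,\sigma))$ (with $X\sim m$) is admissible for the Wasserstein distance, so
\[
W_{2}\bigl(y_{1}(\sigma)(\cdot)(m),\,y_{2}(\sigma)(\cdot)(m)\bigr)\le\Bigl(\int_{R^n}|y_{1}(x,\sigma)-y_{2}(x,\sigma)|^{2}m(x)dx\Bigr)^{1/2}=\|y_{1}(\cdot,\sigma)-y_{2}(\cdot,\sigma)\|_{L_m^2}.
\]
Plugging this together with the pointwise Lipschitz estimate on $D_xF$ and $D_xF_T$ into the integral form and taking the $L_m^2$ norm, one finds, exactly as in~(\ref{eq:2-13}),
\[
\sup_{t\le s\le T}\|\mathcal{K}(y_{1})(\cdot,s)-\mathcal{K}(y_{2})(\cdot,s)\|_{L_m^2}\le\frac{cT(1+T)}{\lambda}\,\sup_{t\le s\le T}\|y_{1}(\cdot,s)-y_{2}(\cdot,s)\|_{L_m^2},
\]
and since $\lambda>cT(1+T)$ by~(\ref{eq:2-5}) the map $\mathcal{K}$ is a strict contraction, so it has a unique fixed point $y_{mt}\in C(t,T;L_m^2)$, which is the asserted unique solution of~(\ref{eq:6-9}).

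I expect the main obstacle to be the bookkeeping around the pushforward measures: one must verify carefully that the map $\sigma\mapsto y(\sigma)(\cdot)(m)$ is continuous into $\mathcal{P}_{2}(R^{n})$ (so that the integrand in~(\ref{eq:6-9}) is well defined and the $L_m^2$-valued integral makes sense), and that the coupling argument giving the Wasserstein bound is legitimate. A secondary, more routine, point is to record an a priori bound on $\|y_{mt}(\cdot,s)\|_{L_m^2}$ analogous to~(\ref{eq:2-15}) — this follows by applying $\mathcal{K}$ to $y\equiv 0$, using~(\ref{eq:6-4-1}), and the contraction property, and it also confirms that the fixed point can be sought in a ball of $C(t,T;L_m^2)$. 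Everything else is a direct transcription of the Hilbert-space argument already carried out in Theorem~\ref{theo2-1}, with $\mathcal{H}$ replaced by $L_m^2(R^n;R^n)$ and $\|X_1-X_2\|$ replaced by $\|y_1(\cdot,s)-y_2(\cdot,s)\|_{L_m^2}$, the point being precisely that the mean-field coupling through $\mathcal{L}_{Y(s)}$ does not worsen the contraction constant thanks to the Wasserstein estimate above.
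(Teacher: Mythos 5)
Your proposal is correct and follows essentially the same route as the paper: a fixed-point/contraction argument for the map defined by the right-hand side of (\ref{eq:6-9}) on $C(t,T;L_{m}^{2}(R^{n};R^{n}))$, with well-definedness from (\ref{eq:6-4-1}) and the contraction from (\ref{eq:6-4}) combined with the coupling bound $W_{2}\bigl(y_{1}(\sigma)(.)(m),y_{2}(\sigma)(.)(m)\bigr)\le\|y_{1}(\cdot,\sigma)-y_{2}(\cdot,\sigma)\|_{L_{m}^{2}}$. The paper states the contraction step only in passing; the Wasserstein coupling inequality you make explicit is precisely what justifies it.
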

\begin{proof}
We use a fixed point argument. We define a map from $C(t,T;L_{m}^{2}(R^{n};R^{n}))$
to itself. Let $z(x,s)$ a function in $C(t,T;L_{m}^{2}(R^{n};R^{n})).$
We define 

\begin{align*}
\zeta(x,s) & =x-\frac{s-t}{\lambda}D_{x}F_{T}(z(x,T),z(T)(.)(m))-\\
- & \frac{1}{\lambda}\int_{t}^{T}D_{x}F(z(x,\sigma),z(\sigma)(.)(m))(s\wedge\sigma-t)d\sigma
\end{align*}
 We have 

\begin{align*}
|\zeta(x,s)| & \leq|x|+\dfrac{T}{\lambda}\dfrac{c}{2}(1+|z(x,T)|+(\int_{R^{n}}|z(\xi,T)|^{2}m(\xi)d\xi)^{1/2})+\\
+ & \dfrac{cT}{2\lambda}\int_{t}^{T}(1+|z(x,\sigma)|+(\int_{R^{n}}|z(\xi,\sigma)|^{2}m(\xi)d\xi)^{1/2})d\sigma
\end{align*}
hence , from norm properties 

\begin{align*}
\sqrt{\int_{R^{n}}|\zeta(x,s)|^{2}m(x)dx} & \leq\sqrt{\int_{R^{n}}|x|^{2}m(x)dx}+\dfrac{T}{\lambda}\dfrac{c}{2}(1+2(\int_{R^{n}}|z(\xi,T)|^{2}m(\xi)d\xi)^{1/2})+\\
+ & \dfrac{cT}{2\lambda}\int_{t}^{T}(1+2(\int_{R^{n}}|z(\xi,\sigma)|^{2}m(\xi)d\xi)^{1/2})d\sigma
\end{align*}
 and we conclude easily that $\zeta$ belongs to $C(t,T;L_{m}^{2}(R^{n};R^{n})).$
We set $\zeta=\mathcal{T}$(z). Using the assumptions and similar
estimates, one checks that $\mathcal{T}$ is a contraction. We prove
indeed that 

\begin{equation}
||\mathcal{T}(z_{1})-\mathcal{T}(z_{2})||_{C(t,T;L_{m}^{2})}\leq(1-\dfrac{cT(T+1)}{\lambda})||z_{1}-z_{2}||_{C(t,T;L_{m}^{2})}\label{eq:6-11}
\end{equation}

$\blacksquare$ 
\end{proof}
It follows immediately that the solution $y_{xmt}(s)=y_{mt}(x,s)$
satisfies the estimate 

\begin{equation}
||y_{mt}(.)||_{C(t,T;L_{m}^{2})}\leq\frac{\lambda\sqrt{\int_{R^{n}}|x|^{2}m(x)dx}+cT(T+1)}{\lambda-cT(T+1)}\label{eq:6-12}
\end{equation}
Since $Y_{Xt}(s)=y_{X,\mathcal{L}_{X},t}(s)$ we deduce the first
estimate (\ref{eq:2-15}). We consider next 

\begin{align}
z_{xmt}(s) & =z_{mt}(x,s)=D_{x}F_{T}(y(x,T),y(T)(.)(m))+\label{eq:6-12-1}\\
+ & \int_{s}^{T}D_{x}F(y(x,\sigma),y(\sigma)(.)(m))d\sigma\nonumber 
\end{align}
and from the assumption (\ref{eq:6-4-1}) we obtain easily 

\[
||z_{mt}(.)||_{C(t,T;L_{m}^{2})}\leq c(1+T)(1+||y_{mt}(.)||_{C(t,T;L_{m}^{2})})
\]
hence

\begin{equation}
||z_{mt}(.)||_{C(t,T;L_{m}^{2})}\leq\lambda c(1+T)\frac{\sqrt{\int_{R^{n}}|x|^{2}m(x)dx}+1}{\lambda-cT(T+1)}\label{eq:6-13}
\end{equation}

Clearly $Z(s)=Z_{Xt}(s)=z_{X,\mathcal{L}_{X},t}(s)$, see (\ref{eq:2-11}),
and we recover the 2nd estimate (\ref{eq:2-15}). 

We can give more properies on $y_{xmt}(s).$ We write first 

\begin{align*}
y_{mt}(x_{1},s)-y_{mt}(x_{2},s) & =x_{1}-x_{2}-\frac{s-t}{\lambda}(D_{x}F_{T}(y_{mt}(x_{1},T),y_{mt}(T)(.)(m))-D_{x}F_{T}(y_{mt}(x_{2},T),y_{mt}(T)(.)(m)))-\\
- & \frac{1}{\lambda}\int_{t}^{T}(D_{x}F(y_{mt}(x_{1},\sigma),y_{mt}(\sigma)(.)(m))-D_{x}F(y_{mt}(x_{2},\sigma),y_{mt}(\sigma)(.)(m)))(s\wedge\sigma-t)d\sigma
\end{align*}
 From (\ref{eq:6-4}) we obtain easily 

\begin{equation}
\sup_{t<s<T}|y_{mt}(x_{1},s)-y_{mt}(x_{2},s)\leq\frac{\lambda|x_{1}-x_{2}|}{\lambda-cT(T+1)}\label{eq:6-14}
\end{equation}

Also 

\begin{equation}
\sup_{t<s<T}|y_{mt}(x,s)|\leq\lambda\frac{|x|+\dfrac{Tc(1+T)(1+\sqrt{\int_{R^{n}}|\xi|^{2}m(\xi)d\xi})}{\lambda-cT(T+1)}}{\lambda-cT(T+1)}\label{eq:6-15}
\end{equation}

A similar estimate holds for $\sup_{t<s<T}|z_{mt}(x,s)|$.

\section{BELLMAN EQUATION AND MASTER EQUATION}

\subsection{THE VALUE FUNCTION }

The value function of the control problem in $\mathcal{H}$ is given
by 

\[
V(X,t)=\frac{1}{2\lambda}\int_{t}^{T}||Z(s)||^{2}ds+\int_{t}^{T}\mathcal{F}(Y(s))ds+\mathcal{F}_{T}(Y(T))
\]
in which $Y(s)=$$y_{X,\mathcal{L}_{X},t}(s)$ and $Z(s)=z_{X,\mathcal{L}_{X},t}(s).$
From this representation and the definition of $\mathcal{F}$ and
$\mathcal{F}_{T}$ we can assert that $V(X,t)$ depends only on $\mathcal{L}_{X}$
and thus can be written $V(m,t)$ with 

\begin{align}
V(m,t) & =\frac{1}{2\lambda}\int_{t}^{T}\int_{R^{n}}|z_{xmt}(s)|^{2}m(x)dxds+\int_{t}^{T}\int_{R^{n}}f(y_{xmt}(s),y_{mt}(s)(.)(m))m(x)dxds+\label{eq:7-1}\\
 & +\int_{R^{n}}h(y_{xmt}(T),y_{mt}(T)(.)(m))m(x)dx\nonumber 
\end{align}
From (\ref{eq:6-13}) we have 

\[
\int_{t}^{T}\int_{R^{n}}|z_{xmt}(s)|^{2}m(x)dxds\leq\dfrac{T\lambda^{2}c^{2}(1+T)^{2}(1+\int_{R^{n}}|x|^{2}m(x)dx)}{(\lambda-cT(T+1))^{2}}
\]
 and $|\mathcal{F}(Y(s))|\leq C(1+||Y(s)||^{2}),$ therefore 

\[
|\int_{R^{n}}f(y_{xmt}(s),y_{mt}(s)(.)(m))m(x)dx|\leq C(1+\int|y_{xmt}(s)|^{2}m(x)dx)
\]
 and from the estimate (\ref{eq:6-12}) we obtain 

\[
|\int_{t}^{T}\int_{R^{n}}f(y_{xmt}(s),y_{mt}(s)(.)(m))m(x)dxds|\leq CT[1+\dfrac{\lambda^{2}\int_{R^{n}}|x|^{2}m(x)dx+T^{2}(T+1)^{2}}{(\lambda-cT(T+1))^{2}}]
\]
 and the third term in the right hand side of $(\ref{eq:7-1}$) satisfies
a similar estimate. We thus have obtained 

\begin{equation}
|V(m,t)|\leq C(1+\int_{R^{n}}|x|^{2}m(x)dx)\label{eq:7-2}
\end{equation}
which is, of course, equivalent to the 1st estimate (\ref{eq:2-6}).

We turn now to $U(x,m,t)=$$\dfrac{\partial V(m,t)}{\partial m}(x)$.
We have seen formally in (\ref{eq:5-19}) that $U(x,m,t)=u(x,t)=u_{mt}(x,t).$
We need to prove it. We begin by giving a solution to the system HJB-FP
equations (\ref{eq:5-13}). We have the 
\begin{lem}
\label{lem4} We make the assumptions of Proposition \ref{prop3}.
We can give an explicit formula to the system (\ref{eq:5-13}). We
have 
\begin{align}
u_{mt}(x,t) & =\dfrac{1}{2\lambda}\int_{t}^{T}|z_{xmt}(s)|^{2}ds+\int_{t}^{T}F(y_{xmt}(s),y_{mt}(s)(.)(m))ds+\label{eq:7-3}\\
 & +F_{T}(y_{xmt}(T),y_{mt}(T)(.)(m))\nonumber 
\end{align}

and $m_{mt}(s)=y_{mt}(s)(.)(m).$\end{lem}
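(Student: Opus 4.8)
The plan is to verify directly that the function $u_{mt}(x,t)$ defined by (\ref{eq:7-3}), together with $m_{mt}(s)=y_{mt}(s)(.)(m)$, solves the HJB--FP system (\ref{eq:5-13}). The starting point is Proposition \ref{prop3}, which gives a unique solution $y_{mt}(x,s)$ of the integral equation (\ref{eq:6-9}), hence a unique $z_{mt}(x,s)$ via (\ref{eq:6-12-1}), and from these the pair $(y,z)$ solves the characteristic system (\ref{eq:6-5})--(\ref{eq:6-6}) with $m(s)=y_{mt}(s)(.)(m)$. First I would promote the flow: for $t<\tau<T$, show $y_{\tau,m(\tau),\tau}(x,s)=y_{mt}(x,s)$ (a semigroup/uniqueness argument, since both solve the same integral equation on $[\tau,T]$ with initial data $y_{mt}(\cdot,\tau)$ at time $\tau$, using that $m(s)$ is the image of $m(\tau)$ under $y_{mt}(\tau\to s)$). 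Equivalently, it is cleanest to push everything to $\mathcal{H}$: set $Y(s)=y_{X,\mathcal{L}_X,t}(s)$, apply Theorem \ref{theo2-1} and the representation (\ref{eq:2-14}) of the value function $V(X,t)$, and use $D_X V(X,t)=Z(t)$ from (\ref{eq:2-23}).

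Next I would define $\tilde u(x,s):=u_{m(s),s}(x,s)$ using formula (\ref{eq:7-3}) with base point $(m(s),s)$, and check the two equations of (\ref{eq:5-13}) in turn. For the FP equation, differentiate $m(s)=y_{mt}(s)(.)(m)$ in $s$: since $\frac{d}{ds}y_{mt}(x,s)=-\frac1\lambda z_{mt}(x,s)=-\frac1\lambda D_x u\big(y_{mt}(x,s),s\big)$ from (\ref{eq:6-5}), the image measure satisfies the continuity equation $\frac{\partial m}{\partial s}-\frac1\lambda\,\mathrm{div}(D_x u\,m)=0$ with $m(t)=m$, which is exactly the second pair of (\ref{eq:5-13}); this needs only the smoothness of $y_{mt}(\cdot,s)$ as a map and a test-function computation. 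For the HJB equation, I would differentiate (\ref{eq:7-3}) along the flow. The identification $U(x,m,t)=\frac{\partial V(m,t)}{\partial m}(x)=u_{mt}(x,t)$ comes from the scalar master-equation discussion in Section 5 together with the $\mathcal H$-level identity $D_X V(X,t)=D_x F(X,\mathcal L_X)\cdot(\text{stuff})$; more concretely, from (\ref{eq:2-23}) we have $D_X V(X,t)=Z(t)$, and unravelling $Z$ through (\ref{eq:6-12-1}) and the chain rule for functional derivatives gives $\frac{\partial V(m,t)}{\partial m}(x)$ evaluated along $Y$. Then the optimality principle (\ref{eq:2-24}) differentiated in $t$ yields, after translating to the $x$-variable, $-\frac{\partial u}{\partial s}+\frac1{2\lambda}|D_x u|^2=F(x,m(s))$ with terminal condition $u(x,T)=F_T(x,m(T))$, which is the first pair of (\ref{eq:5-13}). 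The relation $u_{mt}(x,s)=U(x,m(s),s)$ and its special case (\ref{eq:5-19}) then follow by uniqueness for the HJB equation along a fixed FP trajectory.

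The main obstacle is the rigorous differentiation of the functional $V(m,t)$ with respect to the measure $m$, i.e. proving that the expression (\ref{eq:7-3}) really is the functional derivative $\partial V/\partial m$ in the sense of (\ref{eq:4-22})/(\ref{eq:4-22-1}), not merely a formal candidate. This requires controlling how $y_{xmt}(s)$ and $z_{xmt}(s)$ depend on the base measure $m$ — one needs a Lipschitz/differentiability estimate for $m\mapsto y_{mt}(\cdot,s)$ in $W_2$, obtained by re-running the contraction of Proposition \ref{prop3} with two different initial measures and using (\ref{eq:6-4}); the bound (\ref{eq:6-14}) is the $x$-analogue, and the $m$-analogue is what makes (\ref{eq:4-22-1}) converge. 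An alternative, lighter route that avoids this is to lift to $\mathcal H$ throughout: define everything for random variables, invoke Theorem \ref{theo2-1} for the regularity of $V(X,t)$ and $D_XV(X,t)=Z(t)$, then observe that because $V(X,t)$ depends only on $\mathcal L_X$ (by (\ref{eq:7-1})), its $\mathcal H$-gradient $Z(t)=D_xF(\cdots)$-type expression is of the form $D_x(\partial V/\partial m)(X)$ by the identity (\ref{eq:4-11}), which pins down $\partial V/\partial m$ as in (\ref{eq:7-3}). I would present the lifted argument as the primary proof and remark that the direct $W_2$-computation gives the same formula.
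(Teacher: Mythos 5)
There is a genuine gap, and it comes from misidentifying what the lemma asks for. Lemma \ref{lem4} only claims that the pair $u_{mt}(x,s)$, $m_{mt}(s)=y_{mt}(s)(.)(m)$ solves the HJB--FP system (\ref{eq:5-13}); it says nothing about $V(m,t)$ or $\dfrac{\partial V}{\partial m}$. The identification $U(x,m,t)=\dfrac{\partial V(m,t)}{\partial m}(x)=u_{mt}(x,t)$ is the content of Proposition \ref{prop6}, which comes \emph{after} this lemma and uses it; invoking it here is circular. Likewise, differentiating the optimality principle (\ref{eq:2-24}) in $t$ produces the Bellman equation (\ref{eq:2-8}), i.e.\ (\ref{eq:5-16}) for the value function $V(m,t)$ --- not the pointwise HJB equation for $u(x,s)$ in (\ref{eq:5-13}). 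There is no mere ``translation to the $x$-variable'' between these two: passing from (\ref{eq:5-16}) to an equation for $u(x,s)$ requires differentiating the Bellman equation with respect to $m$, which is exactly the delicate step the paper defers to Proposition \ref{prop6} and the (admittedly not fully rigorous) master-equation section. The paper is explicit that $u(x,s)$ is \emph{not} a value function of the mean-field problem but an adjoint variable, so the route through $V$ cannot close without the later machinery. Your ``main obstacle'' (differentiability of $V$ in $m$) is the obstacle for Proposition \ref{prop6}, not for this lemma.

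The missing idea is the paper's one-line observation: once $m(\cdot)$ is frozen at the specific curve $m(s)=y_{mt}(s)(.)(m)$, the first two lines of (\ref{eq:5-13}) are the standard dynamic-programming equation of a \emph{finite-dimensional} deterministic control problem with dynamics $\dot x=v$, running cost $\frac{\lambda}{2}|v|^2+F(x,m(s))$ and terminal cost $F_T(x,m(T))$. Its Pontryagin system is precisely (\ref{eq:6-5})--(\ref{eq:6-6}), which by the consistency $m(s)=y_{mt}(s)(.)(m)$ coincides with the system solved by $(y_{xmt},z_{xmt})$ in Proposition \ref{prop3}; by convexity the necessary conditions are sufficient, and evaluating the cost along the optimal trajectory gives exactly (\ref{eq:7-3}). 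The parts of your plan that do survive are the FP verification (the image measure of $m$ under the flow $\dot y=-\frac{1}{\lambda}z$ satisfies the continuity equation --- this is essentially the method of characteristics and matches the paper) and the semigroup property of the flow, which is indeed what one needs to get $Du(y(s),s)=z(s)$ at every point rather than only along a single characteristic. But without recognizing the frozen-field problem, the HJB half of your verification does not go through.
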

\begin{proof}
Indeed, if we look at $F(x,m(s))$ and $F_{T}(x,m(T))$ in which $m(.)$
is frozen, the HJB equation appears as a standard one for a deterministic
control problem. This problem is simply 
\begin{align*}
\dfrac{dx}{ds} & =v(s)\\
x(t) & =x
\end{align*}
\[
J_{xt}(v(.))=\frac{\lambda}{2}\int_{t}^{T}|v(s)|^{2}ds+\int_{t}^{T}F(x(s),m(s))ds+F_{T}(x(T),m(T))
\]
 in which the function $m(s)$ is frozen, but not arbitrary. It is
the function solution of the FP equation, in the system (\ref{eq:5-13})
If we write the necessary conditions of optimality, one checks easily
that in view of the specific value of $m(s),$the optimal state is
$y_{xmt}(s)$ and the optimal control is $-\dfrac{1}{\lambda}z_{xmt}(s).$
In plugging these values in the cost function, we obtain formula (\ref{eq:7-3}).
$\blacksquare$ 
\end{proof}
We may assume that 

\begin{align}
|F(x,m)| & ,|F_{T}(x,m)|\leq C(1+|x|^{2}+\int|\xi|^{2}m(\xi)d\xi)\label{eq:7-4}
\end{align}
We shall also assume that 

\begin{equation}
|\frac{\partial F(x,m)}{\partial m}(\xi)|,\:|\frac{\partial F_{T}(x,m)}{\partial m}(\xi)|\leq C(1+|x|^{2}+|\xi|^{2}+\int|\eta|^{2}m(\eta)d\eta)\label{eq:7-41}
\end{equation}

\begin{equation}
|D_{x}D_{\xi}\frac{\partial F(x,m)}{\partial m}(\xi)|\leq C\label{eq:7-42}
\end{equation}
We also make an assumption which simplifies proofs, but which can
be overcome, with technical difficulties. 

\begin{align}
\int_{R^{n}}(F(x,m_{1})-F(x,m_{2})(m_{1}(x)-m_{2}(x))dx & \geq0\label{eq:7-43}\\
\int_{R^{n}}(F_{T}(x,m_{1})-F_{T}(x,m_{2})(m_{1}(x)-m_{2}(x))dx & \geq0\nonumber 
\end{align}
This assumption allows to obtain the following interesting in itself
result
\begin{prop}
\label{prop5-1} We assume (\ref{eq:7-43}). Then considering the
system of HJB-FP equations (\ref{eq:5-13}) with initial conditions
$m_{1}(x)$ and $m_{2}(x)$ and calling $u_{1}(x,s),m_{1}(x,s)$,
respectively $u_{2}(x,s),m_{2}(x,s)$ the solutions, we have the property 

\begin{equation}
\int_{R^{n}}(u_{1}(x,t)-u_{2}(x,t))(m_{1}(x)-m_{2}(x))dx\geq0\label{eq:7-44}
\end{equation}
\end{prop}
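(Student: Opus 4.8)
The plan is the classical Lasry--Lions monotonicity computation: differentiate the bilinear quantity $\phi(s):=\int_{R^{n}}\bigl(u_{1}(x,s)-u_{2}(x,s)\bigr)\bigl(m_{1}(x,s)-m_{2}(x,s)\bigr)\,dx$ with respect to $s$ and integrate backwards from $T$ to $t$. By Lemma \ref{lem4} the pairs $(u_{i},m_{i})$ are given explicitly through $y_{m_{i}t}$ and $z_{m_{i}t}$, so that $m_{i}(s)=y_{m_{i}t}(s)(.)(m_{i})$ is an honest probability density; the estimates (\ref{eq:6-12})--(\ref{eq:6-15}) together with the growth hypotheses (\ref{eq:7-4})--(\ref{eq:7-42}) guarantee that $u_{i}(\cdot,s)$, $Du_{i}(\cdot,s)$ and $m_{i}(\cdot,s)$ are regular enough, and decay fast enough at spatial infinity, for the differentiation under the integral sign and the integration by parts below to be legitimate.

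First I would insert $\partial_{s}u_{i}=\tfrac{1}{2\lambda}|Du_{i}|^{2}-F(x,m_{i}(s))$, coming from the HJB equation in (\ref{eq:5-13}), and $\partial_{s}m_{i}=\tfrac{1}{\lambda}\mathrm{div}(Du_{i}\,m_{i})$, coming from the FP equation, into $\phi'(s)=\int_{R^{n}}\partial_{s}(u_{1}-u_{2})(m_{1}-m_{2})\,dx+\int_{R^{n}}(u_{1}-u_{2})\partial_{s}(m_{1}-m_{2})\,dx$. After integrating by parts the term containing $\partial_{s}(m_{1}-m_{2})$, this gives
\begin{align*}
\phi'(s)&=\frac{1}{2\lambda}\int_{R^{n}}\bigl(|Du_{1}|^{2}-|Du_{2}|^{2}\bigr)(m_{1}-m_{2})\,dx-\frac{1}{\lambda}\int_{R^{n}}(Du_{1}-Du_{2})\cdot(Du_{1}m_{1}-Du_{2}m_{2})\,dx\\
&\qquad-\int_{R^{n}}\bigl(F(x,m_{1}(s))-F(x,m_{2}(s))\bigr)(m_{1}(s)-m_{2}(s))\,dx .
\end{align*}
The pointwise identity $\tfrac12(|p_{1}|^{2}-|p_{2}|^{2})(m_{1}-m_{2})-(p_{1}-p_{2})\cdot(p_{1}m_{1}-p_{2}m_{2})=-\tfrac12|p_{1}-p_{2}|^{2}(m_{1}+m_{2})$, applied with $p_{i}=Du_{i}(x,s)$, collapses the first two integrals and yields
\[
\phi'(s)=-\frac{1}{2\lambda}\int_{R^{n}}|Du_{1}(x,s)-Du_{2}(x,s)|^{2}\bigl(m_{1}(x,s)+m_{2}(x,s)\bigr)\,dx-\int_{R^{n}}\bigl(F(x,m_{1}(s))-F(x,m_{2}(s))\bigr)(m_{1}(s)-m_{2}(s))\,dx .
\]
By the first inequality of (\ref{eq:7-43}) the last integral is $\geq0$, and the first term is $\leq0$ since $m_{1},m_{2}\geq0$; hence $\phi'(s)\leq0$ on $[t,T]$, i.e.\ $\phi$ is non-increasing.

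It then remains to read off the endpoints. The terminal conditions in (\ref{eq:5-13}) give $u_{i}(x,T)=F_{T}(x,m_{i}(T))$, so $\phi(T)=\int_{R^{n}}\bigl(F_{T}(x,m_{1}(T))-F_{T}(x,m_{2}(T))\bigr)(m_{1}(x,T)-m_{2}(x,T))\,dx\geq0$ by the second inequality of (\ref{eq:7-43}). Since $\phi$ is non-increasing and $t\leq T$, we obtain $\phi(t)\geq\phi(T)\geq0$; recalling $m_{i}(x,t)=m_{i}(x)$, this is precisely (\ref{eq:7-44}).

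The computation itself is routine; the main obstacle is its justification, namely verifying that $\phi$ is genuinely $C^{1}$ in $s$, that $\int_{R^{n}}|Du_{i}(x,s)|^{2}(m_{1}+m_{2})\,dx$ and the intermediate integrals are finite, and that no boundary contribution survives the integration by parts in $x$. This is supplied by the explicit representation of Lemma \ref{lem4} together with the a priori bounds (\ref{eq:6-12})--(\ref{eq:6-15}), (\ref{eq:7-4})--(\ref{eq:7-42}), which control the quadratic growth of $u_{i}(\cdot,s)$ and the linear growth of $Du_{i}(\cdot,s)$ against the (uniformly bounded) second moments of the $m_{i}(s)$; since $m_{1}\neq m_{2}$ the coupling $\phi$ is intrinsically a pairing of two measures, so the PDE form above, rather than a lift to $\mathcal H$, is the natural setting for the argument.
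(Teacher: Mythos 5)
Your proposal is correct and is essentially identical to the paper's own proof: the paper also differentiates $\int(u_{1}-u_{2})(m_{1}-m_{2})\,dx$ in $s$, arrives at exactly the same identity with the two nonpositive terms $-\frac{1}{2\lambda}\int(m_{1}+m_{2})|Du_{1}-Du_{2}|^{2}$ and $-\int(F(x,m_{1}(s))-F(x,m_{2}(s)))(m_{1}-m_{2})$, and concludes via positivity of $m_{1},m_{2}$, assumption (\ref{eq:7-43}) and the terminal condition. You merely spell out the pointwise algebraic identity and the endpoint evaluation at $s=T$ that the paper leaves as ``a simple calculation.''
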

\begin{proof}
From the system HJB-FP we can write 

\begin{align*}
-\frac{\partial}{\partial s}(u_{1}-u_{2})+\frac{1}{2\lambda}|Du_{1}|^{2}-\frac{1}{2\lambda}|Du_{2}|^{2} & =F(x,m_{1}(s))-F(x,m_{2}(s))\\
u_{1}(x,T)-u_{2}(x,T) & =F_{T}(x,m_{1}(T))-F_{T}(x,m_{2}(T))
\end{align*}
\begin{align*}
\frac{\partial}{\partial s}(m_{1}-m_{2}) & =\frac{1}{\lambda}\text{div}(Du_{1}m_{1}-Du_{2}m_{2})\\
m_{1}(x,t) & -m_{2}(x,t)=m_{1}(x)-m_{2}(x)
\end{align*}
 then a simple calculation shows that 

\begin{align*}
\frac{d}{ds}\int_{R^{n}}(u_{1}(x,s)-u_{2}(x,s))(m_{1}(x,s)-m_{2}(x,s))dx & =-\int_{R^{n}}(F(x,m_{1}(s))-F(x,m_{2}(s)))(m_{1}(x,s)-m_{2}(x,s))dx\\
- & \frac{1}{2\lambda}\int_{R^{n}}(m_{1}(x,s)+m_{2}(x,s))|Du_{1}(x,s)-Du_{2}(x,s)|^{2}dx
\end{align*}
 and the result follows immediately, recalling that $m_{1},m_{2}$
are positive and using the assumption (\ref{eq:7-43}). $\blacksquare$ 
\end{proof}
We now state the 
\begin{prop}
\label{prop6} We make the assumptions of Proposition \ref{prop3}
and (\ref{eq:7-4}), (\ref{eq:7-41}), (\ref{eq:7-42}), (\ref{eq:7-43}).
We then have 

\begin{equation}
U(x,m,t)=\frac{\partial V}{\partial m}(m,t)(x)=u_{mt}(x,t)\label{eq:7-45}
\end{equation}
Moreover, we have the estimate 

\begin{equation}
|U(x,m,t)|\leq C(1+|x|^{2}+\int_{R^{n}}|\xi|^{2}m(\xi)d\xi)\label{eq:7-450}
\end{equation}
\end{prop}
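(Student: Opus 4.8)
The plan is to prove (\ref{eq:7-45}) by checking directly that the explicit function $u_{mt}(x,t)$ furnished by Lemma~\ref{lem4} has the two properties characterizing the functional derivative of $m\mapsto V(m,t)$ in the sense of (\ref{eq:4-22}): the prescribed growth, and the first-order expansion along any sequence $m_\epsilon\to m$ in $W_2$. The engine is the refined estimate (\ref{eq:2-22}) of Theorem~\ref{theo2-1}, transported from $\mathcal H$ to $\mathcal P_2(R^n)$ by means of a Brenier map.

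The one preliminary computation I would carry out first is the identity $D_xu_{mt}(x,t)=z_{xmt}(t)$. By Proposition~\ref{prop3} and the linear equation obtained on differentiating (\ref{eq:6-9}) in $x$ (the analogue of (\ref{eq:3-3})), the maps $x\mapsto y_{xmt}(s)$ and $x\mapsto z_{xmt}(s)$ are differentiable with bounded derivative; then, differentiating (\ref{eq:7-3}) in $x$ and integrating by parts in $s$ — using $\dot y_{xmt}=-z_{xmt}/\lambda$, $-\dot z_{xmt}=D_xF(y_{xmt}(s),y_{mt}(s)(.)(m))$, $y_{xmt}(t)=x$ and $z_{xmt}(T)=D_xF_T(y_{xmt}(T),y_{mt}(T)(.)(m))$ — all interior contributions cancel and one is left with $z_{xmt}(t)$, exactly as the identity $D_XV(X,t)=Z(t)$ in (\ref{eq:2-23}). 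Moreover, from (\ref{eq:6-14}) and the Lipschitz bound (\ref{eq:6-4}) on $D_xF,D_xF_T$ (the measure argument $y_{mt}(\sigma)(.)(m)$ being independent of $x$), one gets $|z_{x_1mt}(t)-z_{x_2mt}(t)|\le L\,|x_1-x_2|$ with $L$ depending only on $\lambda,c,T$; thus $u_{mt}(\cdot,t)$ is $C^1$ in $x$ with uniformly Lipschitz gradient.

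For the expansion, I would fix $m_\epsilon$ with $W_2(m_\epsilon,m)\to 0$, take $X\in\mathcal H$ with $\mathcal L_X=m$, and set $X_\epsilon=T_m^{m_\epsilon}(X)$, so that $\mathcal L_{X_\epsilon}=m_\epsilon$ and $\|X_\epsilon-X\|=W_2(m_\epsilon,m)$. Since $V$ depends only on the law, $V(m_\epsilon,t)-V(m,t)=V(X_\epsilon,t)-V(X,t)$, and (\ref{eq:2-22}) (with $X_1=X_\epsilon$, $X_2=X$) gives
\[
\bigl|\,V(m_\epsilon,t)-V(m,t)-((Z_{Xt}(t),X_\epsilon-X))\,\bigr|\le C\,W_2(m_\epsilon,m)^2 .
\]
Now $Z_{Xt}(t)=z_{X,\mathcal L_X,t}(t)$ is, by the previous step, the random variable $D_xu_{mt}(X,t)$; on the other hand, since $X_\epsilon$ has law $m_\epsilon$,
\[
\int_{R^n}u_{mt}(x,t)\,(m_\epsilon(x)-m(x))\,dx=E\bigl[u_{mt}(X_\epsilon,t)-u_{mt}(X,t)\bigr]=E\!\int_0^1 D_xu_{mt}(X+\theta(X_\epsilon-X),t)\cdot(X_\epsilon-X)\,d\theta,
\]
so subtracting $((Z_{Xt}(t),X_\epsilon-X))=E[D_xu_{mt}(X,t)\cdot(X_\epsilon-X)]$ and using the Lipschitz constant $L$ bounds the difference by $\tfrac{L}{2}\|X_\epsilon-X\|^2=\tfrac{L}{2}W_2(m_\epsilon,m)^2$. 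Combining, $|V(m_\epsilon,t)-V(m,t)-\int u_{mt}(x,t)(m_\epsilon-m)dx|\le C'W_2(m_\epsilon,m)^2$; dividing by $W_2(m_\epsilon,m)$ and letting $\epsilon\to0$ is precisely (\ref{eq:4-22}), giving (\ref{eq:7-45}). (Assumption (\ref{eq:7-43}), through Proposition~\ref{prop5-1}, is what makes the HJB--FP objects consistently defined; it can also be used to reach the same conclusion via two-sided bounds in the spirit of (\ref{eq:2-19})--(\ref{eq:2-21}), avoiding the Brenier map.)

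Finally, (\ref{eq:7-450}) follows from (\ref{eq:7-45}) and the closed form (\ref{eq:7-3}): (\ref{eq:6-15}) gives $|y_{xmt}(s)|^2\le C(1+|x|^2+\int_{R^n}|\xi|^2m(\xi)d\xi)$, (\ref{eq:6-12}) gives $\int_{R^n}|y_{mt}(x,s)|^2m(x)dx\le C(1+\int_{R^n}|\xi|^2m(\xi)d\xi)$ (the second moment of $y_{mt}(s)(.)(m)$), whence (\ref{eq:6-4-1}) inside (\ref{eq:6-12-1}) yields $|z_{xmt}(s)|^2\le C(1+|x|^2+\int_{R^n}|\xi|^2m(\xi)d\xi)$ and (\ref{eq:7-4}) yields the same bound for $|F(y_{xmt}(s),\cdot)|$ and $|F_T(y_{xmt}(T),\cdot)|$; inserting these into (\ref{eq:7-3}) and integrating over $[t,T]$ produces (\ref{eq:7-450}) (in particular $u_{mt}(\cdot,t)\in L^1_{m'}$ for every $m'\in\mathcal P_2(R^n)$, which is needed even to state (\ref{eq:4-22})). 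The step I expect to be the main obstacle is the first one — the rigorous differentiation of (\ref{eq:7-3}) in $x$ together with the envelope cancellation, i.e. the $C^1$-in-$x$ dependence of $y_{xmt},z_{xmt}$ and of $u_{mt}(\cdot,t)$ — since everything else is a quantitative reuse of estimates already established.
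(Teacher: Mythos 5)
Your proof is correct, but it follows a genuinely different route from the paper's. The paper stays in the space of measures: it freezes the optimal feedback $\hat{v}_{1}$ for the initial density $m_{1}$, evaluates $J_{m_{2}t}(\hat{v}_{1}(.))-J_{m_{1}t}(\hat{v}_{1}(.))$, extracts the linear term $\int u_{1}(x,t)(m_{2}-m_{1})dx$ through a duality identity for the HJB--FP pair, bounds the quadratic remainder by $CW_{2}^{2}(m_{1},m_{2})$ via an independent-copies argument that needs the mixed-derivative bound (\ref{eq:7-42}), and obtains the reverse inequality only by invoking the monotonicity assumption (\ref{eq:7-43}) through Proposition \ref{prop5-1}. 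You instead lift everything to $\mathcal{H}$, apply the already-proved two-sided estimate (\ref{eq:2-22}) to $X_{1}=X_{\epsilon}=T_{m}^{m_{\epsilon}}(X)$, $X_{2}=X$, and then convert the Hilbert-space linear term $((Z_{Xt}(t),X_{\epsilon}-X))$ into $\int u_{mt}(x,t)(m_{\epsilon}-m)dx$ by a Taylor expansion of $u_{mt}(\cdot,t)$ using the uniform Lipschitz continuity of $x\mapsto z_{xmt}(t)$; both directions of (\ref{eq:7-49}) then come for free. This buys a real simplification: your argument does not use (\ref{eq:7-41}), (\ref{eq:7-42}), (\ref{eq:7-43}) or Proposition \ref{prop5-1} at all for the differentiability claim (only (\ref{eq:7-4}) for the growth bound (\ref{eq:7-450})), whereas the paper's argument stays intrinsic to the measure formulation and yields the monotonicity result of Proposition \ref{prop5-1} as a by-product. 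The price you pay is the identity $D_{x}u_{mt}(x,t)=z_{xmt}(t)$, which the paper only asserts on the basis of the formal relation $z_{xmt}(s)=Du_{mt}(y_{xmt}(s),s)$; your envelope computation on the explicit formula (\ref{eq:7-3}) (integration by parts in $s$ using $\dot{y}=-z/\lambda$, $-\dot{z}=D_{x}F$, $D_{x}y(t)=I$, so that all interior terms cancel and only $z_{xmt}(t)$ survives) is the right way to make it rigorous, and the differentiability in $x$ of $y_{xmt},z_{xmt}$ needed for it does follow from the fixed-point equation (\ref{eq:6-9}) exactly as in (\ref{eq:3-3}). You correctly identify this as the main technical burden; everything else in your write-up is a sound quantitative reuse of (\ref{eq:6-12}), (\ref{eq:6-14}), (\ref{eq:6-15}) and matches the paper's own derivation of (\ref{eq:7-450}).
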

\begin{proof}
We recall the definition of the value function $V(m,t),$ see section
\ref{sub:MEAN-FIELD-TYPE}, and formulas (\ref{eq:5-15}) and (\ref{eq:7-3}).
Let $m_{1}(x)$ be some probability density and the functions $u_{1}(x,s)=u_{m_{1}t}(x,s),$
$m_{1}(x,s)=m_{m_{1}t}(x,s)$ solutions of the system HJB-FP (\ref{eq:5-13}).
The feedback $\hat{v}_{1}(x,s)=-\dfrac{1}{\lambda}Du_{1}(x,s)$ is
optimal for the control problem (\ref{eq:5-9}), (\ref{eq:5-10}),
(\ref{eq:5-11}). The corresponding optimal trajectory, starting from
a deterministic value $x$ is $y_{xm_{1}t}(s).$ The probability density
$m_{\hat{v}_{1}}(x,s)$ corresponding to the feedback $\hat{v}_{1}(x,s)$
is the image of $m_{1}$ by the map $x\rightarrow y_{xm_{1}t}(s)$,
so we can write 

\[
m_{1}(s)=m_{\hat{v}_{1}}(s)=y_{m_{1}t}(s)(.)(m_{1})
\]
 We now consider another initial probability density $m_{2}(x)$ and
the same feedback $\hat{v}_{1}.$ Namely we compute $J_{m_{2}t}(\hat{v}_{1}(.)).$
The probability density at time $s,$ with initial condition at time
$t$ equal to $m_{2}$ and feedback $\hat{v}_{1}$ is $y_{m_{1}t}(s)(.)(m_{2})$
denoted $m_{12}(s)=m_{12}(x,s).$ It is solution of the FP equation 

\begin{align*}
\frac{\partial m_{12}}{\partial s}-\dfrac{1}{\lambda}\text{div}(Du_{1}(x)m_{12}) & =0\\
m_{12}(x,t)=m_{2}(x)
\end{align*}
We can then write 

\begin{align*}
J_{m_{2}t}(\hat{v}_{1}(.)) & =\frac{1}{2\lambda}\int_{t}^{T}\int_{R^{n}}|Du_{1}(x,s)|^{2}m_{12}(x,s)dxds+\int_{t}^{T}\int_{R^{n}}m_{12}(x,s)f(x,m_{12}(s))dxds+\\
+ & \int_{R^{n}}m_{12}(x,T)h(x,m_{12}(T))dx
\end{align*}
 Therefore we have the inequality 

\begin{align}
V(m_{2},t)-V(m_{1},t) & \leq J_{m_{2}t}(\hat{v}_{1}(.))-J_{m_{1}t}(\hat{v}_{1}(.))\label{eq:7-46}\\
= & \frac{1}{2\lambda}\int_{t}^{T}\int_{R^{n}}|Du_{1}(x,s)|^{2}(m_{12}(x,s)-m_{1}(x,s))dxds+\nonumber \\
+ & \int_{t}^{T}\int_{R^{n}}(m_{12}(x,s)f(x,m_{12}(s))-m_{1}(x,s)f(x,m_{1}(s)))dxds+\nonumber \\
+ & \int_{R^{n}}(m_{12}(x,T)h(x,m_{12}(T))-m_{1}(x,T)h(x,m_{1}(T)))dx\nonumber 
\end{align}
 We note that 

\begin{align*}
\frac{\partial(m_{12}-m_{1})}{\partial s}-\dfrac{1}{\lambda}\text{div}(Du_{1}(x)(m_{12}-m_{1})) & =0\\
m_{12}(x,t)-m_{1}(x,t)=m_{2}(x)-m_{1}(x)
\end{align*}
 
\begin{align*}
-\frac{\partial}{\partial s}u_{1}+\frac{1}{2\lambda}|Du_{1}|^{2} & =F(x,m_{1}(s))\\
u_{1}(x,T) & =F_{T}(x,m_{1}(T))
\end{align*}
 hence, as easily seen 

\begin{align*}
\int_{R^{n}}u_{1}(x,t)(m_{2}(x)-m_{1}(x))dx & =\frac{1}{2\lambda}\int_{t}^{T}\int_{R^{n}}|Du_{1}(x,s)|^{2}(m_{12}(x,s)-m_{1}(x,s))dxds+
\end{align*}

\[
+\int_{t}^{T}\int_{R^{n}}F(x,m_{1}(s))(m_{12}(x,s)-m_{1}(x,s))dxds+\int_{R^{n}}F_{T}(x,m_{1}(T))(m_{12}(x,T)-m_{1}(x,T))dx
\]
 Combining with (\ref{eq:7-46}) we can write 

\[
V(m_{2},t)-V(m_{1},t)\leq\int_{R^{n}}u_{1}(x,t)(m_{2}(x)-m_{1}(x))dx+
\]
\[
+\int_{t}^{T}\int_{R^{n}}[m_{12}(x,s)f(x,m_{12}(s))-m_{1}(x,s)f(x,m_{1}(s))-F(x,m_{1}(s))(m_{12}(x,s)-m_{1}(x,s))]dxds+
\]

\[
\int_{R^{n}}[m_{12}(x,T)h(x,m_{12}(T))-m_{1}(x,T)f(x,m_{1}(s))-F_{T}(x,m_{1}(T))(m_{12}(x,T)-m_{1}(x,T))]dx
\]
Recalling that $F(x,m)$ is the functional derivative of $\int_{R^{n}}f(x,m)m(x)dx,$
we can write the above inequality as follows 

\begin{equation}
V(m_{2},t)-V(m_{1},t)\leq\int_{R^{n}}u_{1}(x,t)(m_{2}(x)-m_{1}(x))dx+\label{eq:7-47}
\end{equation}
\[
+\int_{t}^{T}\int_{R^{n}}\int_{R^{n}}\int_{0}^{1}\int_{0}^{1}\theta\frac{\partial F}{\partial m}(x,m_{1}(s)+\theta\mu(m_{12}(s)-m_{1}(s))(\xi)(m_{12}(x,s)-m_{1}(x,s))(m_{12}(\xi,s)-m_{1}(\xi,s))dxd\xi dsd\theta d\mu+
\]
\[
+\int_{R^{n}}\int_{R^{n}}\int_{0}^{1}\int_{0}^{1}\theta\frac{\partial F}{\partial m}(x,m_{1}(T)+\theta\mu(m_{12}(T)-m_{1}(T))(\xi)(m_{12}(x,T)-m_{1}(x,T))(m_{12}(\xi,T)-m_{1}(\xi,T))dxd\xi d\theta d\mu
\]
 Recalling that $m_{12}(s)=y_{m_{1}t}(s)(.)(m_{2})$ and $m_{1}(s)=y_{m_{1}t}(s)(.)(m_{1})$,
we can write for a test function $\varphi(x,\xi)$ 

\[
\int_{R^{n}}\int_{R^{n}}\varphi(x,\xi)(m_{12}(x,s)-m_{1}(x,s))(m_{12}(\xi,s)-m_{1}(\xi,s))dxd\xi=
\]
\[
\int_{R^{n}}\int_{R^{n}}\varphi(y_{xm_{1}t}(s),y_{\xi m_{1}t}(s))(m_{2}(x)-m_{1}(x))(m_{2}(\xi)-m_{1}(\xi))dxd\xi
\]

We introduce a pair of random variables $X_{1},X_{2}$whose marginals
are $m_{1},m_{2}.$We then introduce an independent copy $Y_{1},Y_{2}.$
It is easy to convince oneself that we have the relation 

\[
\int_{R^{n}}\int_{R^{n}}\varphi(y_{xm_{1}t}(s),y_{\xi m_{1}t}(s))(m_{2}(x)-m_{1}(x))(m_{2}(\xi)-m_{1}(\xi))dxd\xi=
\]
\begin{align*}
\int_{0}^{1}\int_{0}^{1}E\,D_{\xi}D_{x}\varphi(y_{X_{1}m_{1}t}(s)+\theta(y_{X_{2}m_{1}t}(s)-y_{X_{1}m_{1}t}(s)),y_{Y_{1}m_{1}t}(s)+\mu(y_{Y_{2}m_{1}t}(s)-y_{Y_{1}m_{1}t}(s)))\\
(y_{X_{2}m_{1}t}(s)-y_{X_{1}m_{1}t}(s))(y_{Y_{2}m_{1}t}(s)-y_{Y_{1}m_{1}t}(s))d\theta d\mu
\end{align*}
 If we have $||D_{\xi}D_{x}\varphi(x,\xi)||\leq C,$then we get 

\[
|\int_{R^{n}}\int_{R^{n}}\varphi(y_{xm_{1}t}(s),y_{\xi m_{1}t}(s))(m_{2}(x)-m_{1}(x))(m_{2}(\xi)-m_{1}(\xi))dxd\xi|\leq|
\]

\[
CE|y_{X_{2}m_{1}t}(s)-y_{X_{1}m_{1}t}(s)||y_{Y_{2}m_{1}t}(s)-y_{Y_{1}m_{1}t}(s)
\]

and from the independence property 

\[
\leq C(E|y_{X_{2}m_{1}t}(s)-y_{X_{1}m_{1}t}(s)|)^{2}\leq CE|y_{X_{2}m_{1}t}(s)-y_{X_{1}m_{1}t}(s)|^{2}
\]
 Using property (\ref{eq:6-14}) we obtain also 

\[
|\int_{R^{n}}\int_{R^{n}}\varphi(y_{xm_{1}t}(s),y_{\xi m_{1}t}(s))(m_{2}(x)-m_{1}(x))(m_{2}(\xi)-m_{1}(\xi))dxd\xi|\leq CE|X_{2}-X_{1}|^{2}
\]
 and since$X_{1},X_{2}$ have an arbitrary correlation, this implies
also 

\[
|\int_{R^{n}}\int_{R^{n}}\varphi(y_{xm_{1}t}(s),y_{\xi m_{1}t}(s))(m_{2}(x)-m_{1}(x))(m_{2}(\xi)-m_{1}(\xi))dxd\xi|\leq CW_{2}^{2}(m_{1},m_{2})
\]
 We may apply this result with $\varphi(x,$$\xi)=\frac{\partial F}{\partial m}(x,m_{1}(s)+\theta\mu(m_{12}(s)-m_{1}(s))(\xi).$
Thanks to assumption (\ref{eq:7-42}) the same result carries over.
Therefore we conclude easily the estimate 

\begin{equation}
V(m_{2},t)-V(m_{1},t)\leq\int_{R^{n}}u_{1}(x,t)(m_{2}(x)-m_{1}(x))dx+CW_{2}^{2}(m_{1},m_{2})\label{eq:7-48}
\end{equation}
Interchanging the role of $m_{1},m_{2}$, we have also 

\begin{align*}
V(m_{1},t)-V(m_{2},t) & \leq\int_{R^{n}}u_{2}(x,t)(m_{1}(x)-m_{2}(x))dx+CW_{2}^{2}(m_{1},m_{2})\\
\leq & \int_{R^{n}}u_{1}(x,t)(m_{1}(x)-m_{2}(x))dx+\int_{R^{n}}(u_{2}(x,t)-u(x_{1},t)(m_{1}(x)-m_{2}(x))dx+CW_{2}^{2}(m_{1},m_{2})
\end{align*}
 and from Proposition \ref{prop5-1} and assumption (\ref{eq:7-43})
the 2nd integral is negative, which implies 

\[
V(m_{1},t)-V(m_{2},t)\leq\int_{R^{n}}u_{1}(x,t)(m_{1}(x)-m_{2}(x))dx+CW_{2}^{2}(m_{1},m_{2})
\]
or

\[
V(m_{2},t)-V(m_{1},t)\geq\int_{R^{n}}u_{1}(x,t)(m_{2}(x)-m_{1}(x))dx-CW_{2}^{2}(m_{1},m_{2})
\]
 and comparing with (\ref{eq:7-48}) we can assert 

\begin{equation}
|V(m_{2},t)-V(m_{1},t)-\int_{R^{n}}u_{1}(x,t)(m_{2}(x)-m_{1}(x))dx|\leq CW_{2}^{2}(m_{1},m_{2})\label{eq:7-49}
\end{equation}
Now we have 

\begin{equation}
|u_{mt}(x,t)|\leq C(1+|x|^{2}+\int_{R^{n}}|\xi|^{2}m(\xi)d\xi\label{eq:7-50}
\end{equation}
So for any curve $m_{\epsilon}\in\mathcal{P}_{2}$, $u_{mt}(x,t)\in L_{m_{\epsilon}}^{1}$.
From the estimate (\ref{eq:7-49}) we get immediately the result (\ref{eq:7-45}).
The proof has been completed. $\blacksquare$
\end{proof}

\subsection{OBTAINING BELLMAN EQUATION}

We have seen in section \ref{sub:INTERPRETATION-OF-THE}that 

\[
z_{xmt}(s)=Du_{mt}(y_{xmt}(s),s)
\]
 and thus 

\begin{equation}
D_{x}U(x,m,t)=z_{xmt}(t)\label{eq:7-8}
\end{equation}
Therefore from the estimate (\ref{eq:6-15}) we can assert that 

\begin{equation}
|D_{x}U(x,m,t)|\leq C[1+|x|+\sqrt{\int|\xi|^{2}m(\xi)d\xi}]\label{eq:7-9}
\end{equation}
In particular, we can see that $D_{x}U(x,m,t)$ belongs to $L_{m}^{2}(R^{n};R^{n}).$
But then, recalling the correspondance $V(X,t)=V(m,t)|_{m=\mathcal{L}_{X}},$we
can write 

\begin{equation}
D_{X}V(X,t)=D_{x}U(X,\mathcal{L}_{X},t)\label{eq:7-10}
\end{equation}
and 

\[
||D_{X}V(X,t)||^{2}=\int_{R^{n}}|D_{x}\frac{\partial V(m,t)}{\partial m}(x)|^{2}m(x)dx
\]
If we look at the Bellman equation in the Hilbert space $\mathcal{H}$,
see (\ref{eq:2-8}) we obtain exactly (\ref{eq:5-16}). So we can
state 
\begin{prop}
\label{prop5} We make the assumptions of Proposition \ref{prop6}.
The value function $V(m,t)$ of the problem (\ref{eq:5-9}),(\ref{eq:5-12})
or equivalently (\ref{eq:5-10}),(\ref{eq:5-11}) satisfies the estimates
(\ref{eq:7-2}), (\ref{eq:7-450}) (with $U(x,m,t)=\dfrac{\partial V(m,t)}{\partial m}(x)$
) and (\ref{eq:7-9}). It is the unique solution, satisfying these
estimates, of the Bellman equation (\ref{eq:5-16}). Moreover, we
have the explicit formula (\ref{eq:7-1}) with $y_{xmt}(s)$ being
the unique solution of (\ref{eq:6-9}) and $z_{xmt}(s)=-\lambda\dfrac{dy_{xmt}(s)}{ds}$ 
\end{prop}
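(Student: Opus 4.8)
The plan is to assemble the pieces already in place and then dispose of uniqueness by transporting the question to the Hilbert space $\mathcal{H}$ and running the verification argument of Theorem~\ref{theo2-1}. Three of the four assertions require only bookkeeping. Estimate (\ref{eq:7-2}) was already derived from the representation (\ref{eq:7-1}) together with the a~priori bounds (\ref{eq:6-12})--(\ref{eq:6-13}) on $y_{xmt}$ and $z_{xmt}$; estimate (\ref{eq:7-450}) is the conclusion of Proposition~\ref{prop6}; and (\ref{eq:7-9}) is read off from the identity $D_{x}U(x,m,t)=z_{xmt}(t)$ of (\ref{eq:7-8}) combined with the $z$-analogue of the bound (\ref{eq:6-15}). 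The explicit formula is simply the definition (\ref{eq:7-1}) of $V(m,t)$, in which $y_{xmt}(\cdot)$ is the unique solution of (\ref{eq:6-9}) furnished by Proposition~\ref{prop3} and $z_{xmt}(s)=-\lambda\,dy_{xmt}(s)/ds$ because (\ref{eq:6-9}) is the integral form of the second order equation (\ref{eq:6-8}) (equivalently $dy/ds=-\tfrac1\lambda Du=-\tfrac1\lambda z$).

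For the Bellman equation I would use the dictionary between the two formulations. Put $V(X,t):=V(\mathcal{L}_{X},t)$; this is by construction the value function of the abstract problem (\ref{eq:2-2})--(\ref{eq:2-3}), so by Theorem~\ref{theo2-1} it is $C^{1}$ and satisfies (\ref{eq:2-8}). Now translate each term: Proposition~\ref{prop6} and (\ref{eq:7-10}) give $D_{X}V(X,t)=D_{x}U(X,\mathcal{L}_{X},t)$, hence $\|D_{X}V(X,t)\|^{2}=\int_{R^{n}}|D_{\xi}\tfrac{\partial V(m,t)}{\partial m}(\xi)|^{2}m(\xi)\,d\xi$; moreover $\mathcal{F}(X)=\int_{R^{n}}f(\xi,m)m(\xi)\,d\xi$, $\mathcal{F}_{T}(X)=\int_{R^{n}}h(\xi,m)m(\xi)\,d\xi$, and $\partial_{t}V(X,t)=\partial_{t}V(m,t)$. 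Substituting into (\ref{eq:2-8}) reproduces exactly (\ref{eq:5-16}) together with its terminal condition.

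It remains to prove uniqueness within the class of solutions of (\ref{eq:5-16}) satisfying (\ref{eq:7-2}), (\ref{eq:7-450}), (\ref{eq:7-9}). Given such a $\tilde V(m,t)$, lift it to $\tilde V(X,t):=\tilde V(\mathcal{L}_{X},t)$. Using the correspondence of Section~\ref{sub:GRADIENT-IN-THE} between the functional derivative and the $\mathcal{H}$-gradient, the bounds (\ref{eq:7-450}), (\ref{eq:7-9}) make $\tilde V$ a $C^{1}$ functional on $\mathcal{H}$ with $D_{X}\tilde V(X,t)=D_{x}\tfrac{\partial\tilde V(m,t)}{\partial m}(X)$, and the translation of the previous paragraph then shows it solves the abstract HJB equation (\ref{eq:2-8}) and obeys the growth conditions (\ref{eq:2-6}) (the first from (\ref{eq:7-2}), the bound on $\|D_{X}\tilde V\|$ from (\ref{eq:7-9}), the bound on $\partial_{t}\tilde V$ from the equation). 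I would then run verification: for any $v(\cdot)\in L^{2}(t,T;\mathcal{H})$, the chain rule along the state (\ref{eq:2-2}) and the equation give $\frac{d}{ds}\tilde V(X(s),s)\ge -\mathcal{F}(X(s))-\tfrac{\lambda}{2}\|v(s)\|^{2}$, so $\tilde V(X,t)\le J_{Xt}(v(\cdot))$ and hence $\tilde V\le V$; conversely, since $D_{X}\tilde V$ is continuous with at most linear growth, Peano's theorem yields a (not necessarily unique) solution $\hat X(\cdot)$ on $[t,T]$ of $d\hat X/ds=-\tfrac1\lambda D_{X}\tilde V(\hat X(s),s)$, $\hat X(t)=X$, and for $\hat v(s):=-\tfrac1\lambda D_{X}\tilde V(\hat X(s),s)$ the chain rule holds with equality, giving $\tilde V(X,t)=J_{Xt}(\hat v(\cdot))\ge V(X,t)$. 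Thus $\tilde V=V$.

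The step I expect to demand the most care is the first move of the last paragraph: justifying that the lift $\tilde V(X,t)$ is genuinely $C^{1}$ on $\mathcal{H}$, jointly in $(X,t)$, purely from the measure-space data, because the identity $D_{X}\tilde V(X,t)=D_{x}\partial\tilde V/\partial m(X)$ rests on the computation of Section~\ref{sub:GRADIENT-IN-THE} and hence on sufficient regularity of $\tilde V(\cdot,t)$ as a functional of $m$; once that point is secured, the rest is an application of Theorem~\ref{theo2-1}, Proposition~\ref{prop3} and Proposition~\ref{prop6}.
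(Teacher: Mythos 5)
Your assembly of the estimates, the explicit formula, and the derivation of (\ref{eq:5-16}) by translating the Hilbert-space equation (\ref{eq:2-8}) through the dictionary $D_{X}V(X,t)=D_{x}\frac{\partial V}{\partial m}(\mathcal{L}_{X},t)(X)$ is exactly the paper's route: Proposition \ref{prop5} is stated there as the summary of the computations preceding it together with Propositions \ref{prop3} and \ref{prop6}, with uniqueness meant to be inherited from Theorem \ref{theo2-1} via the lift $V(X,t)=V(\mathcal{L}_{X},t)$.

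The genuine gap is in your uniqueness argument. To produce the closed-loop trajectory you invoke Peano's theorem for $d\hat{X}/ds=-\tfrac{1}{\lambda}D_{X}\tilde{V}(\hat{X}(s),s)$ from mere continuity and linear growth of $D_{X}\tilde{V}$. Peano's existence theorem is \emph{false} in infinite-dimensional Banach and Hilbert spaces (by Godunov's theorem its validity characterizes finite dimensionality, and Dieudonn\'e-type counterexamples exist already for bounded continuous fields); here $\mathcal{H}=L^{2}(\Omega,\mathcal{A},P;R^{n})$ is infinite-dimensional, since it is assumed rich enough to contain all laws in $\mathcal{P}_{2}(R^{n})$. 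The verification step in Theorem \ref{theo2-1} solves (\ref{eq:2-30}) precisely because $D_{X}V(\cdot,s)$ is \emph{uniformly Lipschitz}, i.e.\ because the candidate solution is assumed to satisfy (\ref{eq:2-7}); the uniqueness class of Proposition \ref{prop5} as you use it --- only the growth bounds (\ref{eq:7-2}), (\ref{eq:7-450}), (\ref{eq:7-9}) --- does not supply this, so the "conversely" half of your verification breaks down. To repair it you must carry a Lipschitz bound on $D_{x}\frac{\partial\tilde{V}}{\partial m}$ into the uniqueness class (the value function itself satisfies one, by (\ref{eq:2-29}) or equivalently (\ref{eq:6-14})), or otherwise justify solvability of the characteristic equation. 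The separate issue you flag at the end --- that the lift $\tilde{V}(\mathcal{L}_{X},t)$ is jointly $C^{1}$ on $\mathcal{H}\times(0,T)$ so that the chain rule applies --- is also left unproved, but the Peano step is the one that outright fails.
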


\subsection{OBTAINING THE SCALAR MASTER EQUATION}

We can derive the scalar master equation from the probabilistic master
equation (\ref{eq:3-9}), which we write as follows 

\begin{align}
\frac{\partial\mathcal{U}}{\partial t}-\frac{1}{2\lambda}D_{X}||\mathcal{U}(X,t)||^{2}+D_{X}\mathcal{F}(X) & =0\label{eq:3-9-1}\\
\mathcal{U}(X,T)=D_{X}\mathcal{F}_{T}(X)\nonumber 
\end{align}
 We know that $\mathcal{U}(X,t)=D_{x}U(X,\mathcal{L}_{X},t)$ and
$U(x,m,t)=u_{mt}(x,t)$ and $D_{x}u_{mt}(x,t)=z_{xmt}(t).$ Therefore
$||\mathcal{U}(X,t)||^{2}=\int_{R^{n}}|D_{\xi}U(\xi,m,t)|^{2}m(\xi)d\xi.$
Since this functional of $m$ only has a derivative in the Hilbert
space $\mathcal{H}$ it can be written as follows 

\[
D_{X}||\mathcal{U}(X,t)||^{2}=D_{x}\frac{\partial}{\partial m}\,(\int_{R^{n}}|D_{\xi}U(\xi,m,t)|^{2}m(\xi)d\xi)(X)
\]
 Recalling that $D_{X}\mathcal{F}(X)=D_{x}F(X,m),$ $D_{X}\mathcal{F}_{T}(X)=D_{x}F_{T}(X,m),$
we see that (\ref{eq:3-9-1}) can be wriiten as follows

\begin{align*}
D_{x}[\frac{\partial U(X,m,t)}{\partial t}-\frac{1}{2\lambda}\frac{\partial}{\partial m}\,(\int_{R^{n}}|D_{\xi}U(\xi,m,t)|^{2}m(\xi)d\xi)(X)+F(X,m)] & =0\\
D_{x}U(X,m,T)=D_{x}F_{T}(X,m)
\end{align*}
This leads to 

\begin{align*}
\frac{\partial U(x,m,t)}{\partial t}-\frac{1}{2\lambda}\frac{\partial}{\partial m}\,(\int_{R^{n}}|D_{\xi}U(\xi,m,t)|^{2}m(\xi)d\xi)(x)+F(x,m) & =0\\
U(x,m,T)=F_{T}(x,m)
\end{align*}
 which we can write as (\ref{eq:5-17}), taking account of the symmetry
property $\dfrac{\partial}{\partial m}U(x,m,t)(\xi)=\dfrac{\partial}{\partial m}U(\xi,m,t)(x)$.
$\blacksquare$ 

This proof is not fully rigorous. It assumes implicitly the existence
of $\dfrac{\partial}{\partial m}U(x,m,t)(\xi),$ which is the 2nd
derivative of the function $V(m,t).$ To study it rigorously and give
an implicit formula for $\dfrac{\partial}{\partial m}U(x,m,t)(\xi)$
, one can use the system of HJB-FP equations (\ref{eq:5-13}) and
write the solution as $u_{mt}(x,s),m_{mt}(x,s)$ to emphasize the
initial conditions $m,t.$ We then consider the functional derivatives
$\dfrac{\partial u_{mt}(x,s)}{\partial m}(\xi)$, $\dfrac{\partial m_{mt}(x,s)}{\partial m}(\xi)$
and differentiate formally the system of HJB-FP equations. To simplify
notation, we take a test function $\tilde{m}(\xi)$ and consider 

\[
\int_{R^{n}}\dfrac{\partial u_{mt}(x,s)}{\partial m}(\xi)\tilde{m}(\xi)\,d\xi,\:\int_{R^{n}}\dfrac{\partial m_{mt}(x,s)}{\partial m}(\xi)\tilde{m}(\xi)\,d\xi
\]
 which we note $\tilde{u}_{mt;\tilde{m}}(x,s),\:$ $\tilde{m}_{mt;\tilde{m}}(x,s)$
and to simplify further $\tilde{u}(x,s),$ $\tilde{m}(x,s).$ In particular
, $\tilde{u}(x,t)=\int\dfrac{\partial}{\partial m}U(x,m,t)(\xi)\tilde{m}(\xi)\,d\xi$.
The pair $\tilde{u}(x,s),\tilde{m}(x,s)$ is solution of a system
of linear P.D.E. as follows 

\begin{align}
-\dfrac{\partial\tilde{u}}{\partial s}+\frac{1}{\lambda}D\tilde{u}.Du & =\int\frac{\partial F}{\partial m}(x,m(s))(\xi)\tilde{m}(\xi,s)d\xi\label{eq:7-11}\\
\tilde{u}(x,T) & =\int\frac{\partial F_{T}}{\partial m}(x,m(T))(\xi)\tilde{m}(\xi,T)d\xi\nonumber 
\end{align}
\begin{align*}
\dfrac{\partial\tilde{m}}{\partial s}-\frac{1}{\lambda}\text{div}(D\tilde{u}\,m+Du\,\tilde{m}) & =0\\
\tilde{m}(x,t)=\tilde{m}(x)
\end{align*}
 This system is obtained by linearization of the system (\ref{eq:5-13}).
The functions $u(x,s),$$m(x,s)$ are solutions of the system (\ref{eq:5-13}).
We can write also

\begin{equation}
\tilde{u}(x,s)=\int_{s}^{T}\int_{R^{n}}\frac{\partial F}{\partial m}(y_{xmt}(\sigma),y_{mt}(\sigma)(.)(m))(\xi)\tilde{m}(\xi,\sigma)d\xi d\sigma+\int_{R^{n}}\frac{\partial F_{T}}{\partial m}(y_{xmt}(T),y_{mt}(T)(.)(m))(\xi)\tilde{m}(\xi,T)d\xi\label{eq:7-12}
\end{equation}

We can then study (\ref{eq:7-12}) as a fixed point equation in the
function $\tilde{u}(x,s).$

\section{\label{sec:QUADRATIC-CASE}QUADRATIC CASE}

\subsection{ASSUMPTIONS }

We consider the quadratic case, (\ref{eq:5-53}). We also take 

\begin{equation}
h(x,m)=\frac{1}{2}(x-S_{T}\bar{x})^{*}\bar{Q_{T}}(x-S_{T}\bar{x})+\frac{1}{2}x^{*}Q_{T}x\label{eq:5-13-1}
\end{equation}
In the space $\mathcal{H}$ we have 

\begin{align}
\mathcal{F}(X) & =\frac{1}{2}EX^{*}(Q+\bar{Q})X+\frac{1}{2}EX^{*}(S^{*}\bar{Q}S-\bar{Q}S-S^{*}\bar{Q})EX\nonumber \\
\mathcal{F}_{T}(X) & =\frac{1}{2}EX^{*}(Q_{T}+\bar{Q})X+\frac{1}{2}EX^{*}(S^{*}\bar{Q}S-\bar{Q}S-S^{*}\bar{Q})EX\label{eq:8-1}
\end{align}
 We can write $\mathcal{F}(X)=Ef(X,\mathcal{L}_{X})$ with 

\begin{equation}
f(x,m)=\frac{1}{2}(x-S\bar{x})^{*}\bar{Q}(x-S\bar{x})+\frac{1}{2}x^{*}Qx\label{eq:8-2}
\end{equation}
 in which $\bar{x}=\int xm(x)dx$, assuming the probability law of
$X$ has a density, $m.$ So we can also write 

\begin{align*}
\mathcal{F}(X) & =\Phi(m)=\dfrac{1}{2}\int_{R^{n}}x*(Q+\bar{Q)}xm(x)dx+\dfrac{1}{2}\int_{R^{n}}x^{*}m(x)dx\,(S^{*}\bar{Q}S-\bar{Q}S-S^{*}\bar{Q})\,\int_{R^{n}}xm(x)dx\\
 & =\int_{R^{n}}f(x,m)m(x)dx
\end{align*}
 If we take $m\in L^{2}(R^{n})\cap L^{1}(R^{n})$ not necessarily
a probability density, then we have to introduce $m_{1}=\int_{R^{n}}m(x)dx$
and write 

\begin{equation}
\Phi(m)=\int_{R^{n}}f(x,m)m(x)dx=\label{eq:8-3}
\end{equation}

\[
=\dfrac{1}{2}\int_{R^{n}}x*(Q+\bar{Q)}xm(x)dx+\dfrac{1}{2}\bar{x}^{*}S^{*}\bar{Q}S\bar{x}\,m_{1}-\dfrac{1}{2}\bar{x}^{*}(\bar{Q}S+S^{*}\bar{Q})\bar{x}
\]
 We have noted $F(x,m)=\dfrac{\partial\Phi(m)}{\partial m}(x).$ Then
as a Gâteaux differential we have 

\begin{equation}
F(x,m)=\dfrac{1}{2}x*(Q+\bar{Q)}x+\bar{x}^{*}(S^{*}\bar{Q}S\,m_{1}-\bar{Q}S-S^{*}\bar{Q})x+\dfrac{1}{2}\bar{x}^{*}S^{*}\bar{Q}S\bar{x}\label{eq:8-4}
\end{equation}

We note that 

\begin{equation}
D_{X}\mathcal{F}(X)=(Q+\bar{Q})X+(S^{*}\bar{Q}S-\bar{Q}S-S^{*}\bar{Q})EX\label{eq:8-5}
\end{equation}

So the equality $D_{X}\mathcal{F}(X)=D_{x}F(X,m)$ is true only when
$m_{1}=1.$ It is important to keep in mind that when we work with
Gâteaux differentials, we have to make calculations with the term
$m_{1},$even though that eventually, when applied to $m=$ probability
density, we shall have $m_{1}=1.$ To understand further this point,
let us compute the 2nd derivative. We have 

\begin{equation}
\frac{\partial F}{\partial m}(x,m)(\xi)=\bar{x}^{*}S^{*}\bar{Q}S(x+\xi)+\xi^{*}(S^{*}\bar{Q}S\,m_{1}-\bar{Q}S-S^{*}\bar{Q})x\label{eq:8-6}
\end{equation}
We see that this formula is symmetric in $x,\xi$ as needed. Without
the term $m_{1}$ in (\ref{eq:8-4}) this will not be true. We have 

\[
D_{x}^{2}F(x,m)=Q+\bar{Q},\;D_{x}D_{\xi}\frac{\partial F}{\partial m}(x,m)(\xi)=(S^{*}\bar{Q}S\,m_{1}-\bar{Q}S-S^{*}\bar{Q})
\]
 and , see \cite{BFY2} 

\begin{align*}
D_{X}^{2}\mathcal{F}(X)Z & =D_{x}^{2}F(X,\mathcal{L}_{X})Z+E_{Y\tilde{Z}}D_{x}D_{y}\frac{\partial F}{\partial m}(X,\mathcal{L}_{X})(Y)\tilde{Z}=\\
 & =(Q+\bar{Q})Z+(S^{*}\bar{Q}S-\bar{Q}S-S^{*}\bar{Q})EZ
\end{align*}
 which is exactly what we obtain by differentiating (\ref{eq:8-5})
in the Hilbert space.

\subsection{BELLMAN EQUATION}

Bellman equation ( \ref{eq:5-16}) writes 

\begin{equation}
\frac{\partial V}{\partial t}-\frac{1}{2\lambda}\int_{R^{n}}|D_{\xi}\frac{\partial V(m,t)}{\partial m}(\xi)|^{2}m(\xi)d\xi+\label{eq:8-7}
\end{equation}
\[
+\dfrac{1}{2}\int_{R^{n}}\xi*(Q+\bar{Q)}\xi m(\xi)d\xi+\dfrac{1}{2}\bar{x}^{*}S^{*}\bar{Q}S\bar{x}\,m_{1}-\dfrac{1}{2}\bar{x}^{*}(\bar{Q}S+S^{*}\bar{Q})\bar{x}=0
\]

\[
V(m,T)=\dfrac{1}{2}\int_{R^{n}}\xi*(Q_{T}+\bar{Q_{T}})\xi m(\xi)d\xi+\dfrac{1}{2}\bar{x}^{*}S_{T}^{*}\bar{Q}_{T}S_{T}\bar{x}\,m_{1}-\dfrac{1}{2}\bar{x}^{*}(\bar{Q}_{T}S_{T}+S_{T}^{*}\bar{Q}_{T})\bar{x}
\]
The solution is 

\begin{equation}
V(m,t)=\dfrac{1}{2}\int_{R^{n}}\xi*P(t)\xi m(\xi)d\xi+\dfrac{1}{2}\bar{x}^{*}\Sigma(t;m_{1})\bar{x}\label{eq:8-8}
\end{equation}
 with 

\begin{align}
\frac{dP}{dt}-\frac{P^{2}}{\lambda}+Q+\bar{Q} & =0\label{eq:8-9}\\
P(T)=Q_{T}+\bar{Q_{T}}\nonumber 
\end{align}
\begin{align}
\frac{d\Sigma}{dt}-\frac{1}{\lambda}(\Sigma P+P\Sigma)-\frac{1}{\lambda}\Sigma^{2}m_{1}+S^{*}\bar{Q}S\,m_{1}-(\bar{Q}S+S^{*}\bar{Q}) & =0\label{eq:8-10}\\
\Sigma(T;m_{1})=S_{T}^{*}\bar{Q}_{T}S_{T}\,m_{1}-(\bar{Q}_{T}S_{T}+S_{T}^{*}\bar{Q}_{T})\nonumber 
\end{align}

\subsection{MASTER EQUATION}

The scalar Master equation (\ref{eq:5-17}) reads 

\begin{align}
\frac{\partial U}{\partial t}-\frac{1}{\lambda}\int_{R^{n}}D_{\xi}\frac{\partial U}{\partial m}(x,m,t)(\xi).D_{\xi}U(\xi,m,t)m(\xi)d\xi-\frac{1}{2\lambda}|D_{x}U(x,m,t)|^{2}\label{eq:5-17-1}\\
+\dfrac{1}{2}x*(Q+\bar{Q)}x+\bar{x}^{*}(S^{*}\bar{Q}S\,m_{1}-\bar{Q}S-S^{*}\bar{Q})x+\dfrac{1}{2}\bar{x}^{*}S^{*}\bar{Q}S\bar{x}=0\nonumber \\
U(x,m,T)=\dfrac{1}{2}x*(Q_{T}+\bar{Q}_{T})x+\bar{x}^{*}(S_{T}^{*}\bar{Q_{T}}S_{T}\,m_{1}-\bar{Q_{T}}S_{T}-S_{T}^{*}\bar{Q}_{T})x+\dfrac{1}{2}\bar{x}^{*}S_{T}^{*}\bar{Q_{T}}S_{T}\bar{x}\nonumber 
\end{align}
Its solution is

\begin{equation}
U(x,m,t)=\frac{\partial V(m,t)}{\partial m}(x)=\frac{1}{2}x^{*}P(t)x+\bar{x}^{*}\Sigma(t;m_{1})x+\frac{1}{2}\bar{x}^{*}\frac{\partial\Sigma(t;m_{1})}{\partial m_{1}}\bar{x}\label{eq:5-18-1}
\end{equation}
We have 

\begin{equation}
D_{x}U(x,m,t)=P(t)x+\Sigma(t;m_{1})\bar{x}\label{eq:5-19-1}
\end{equation}

\[
\frac{\partial U}{\partial m}(x,m,t)(\xi)=\bar{x}^{*}\frac{\partial\Sigma(t;m_{1})}{\partial m_{1}}(x+\xi)+\xi^{*}\Sigma(t;m_{1})x+\frac{1}{2}\bar{x}^{*}\frac{\partial^{2}\Sigma(t;m_{1})}{\partial m_{1}^{2}}\bar{x}
\]
\[
D_{\xi}\frac{\partial U}{\partial m}(x,m,t)(\xi)=\frac{\partial\Sigma(t;m_{1})}{\partial m_{1}}\bar{x}+\Sigma(t;m_{1})x
\]
We note that $\Gamma(t;m_{1})=\dfrac{\partial\Sigma(t;m_{1})}{\partial m_{1}}$
satisfies the equation 

\begin{align}
\frac{d\Gamma}{dt}-\frac{1}{\lambda}(\Gamma(P+\Sigma m_{1})+(P+\Sigma m_{1})\Gamma)-\frac{1}{\lambda^{2}}\Sigma^{2}+S^{*}\bar{Q}S & =0\label{eq:5-19-2}\\
\Gamma(T;m_{1})=S_{T}^{*}\bar{Q}_{T}S_{T}\nonumber 
\end{align}
 and we check easily that the function $U(x,m,t)$ defined by (\ref{eq:5-18-1})
is solution of the scalar master equation (\ref{eq:5-17-1}). 

We turn to the vector master equation (\ref{eq:5-20}) which reads 

\begin{align}
\frac{\partial\mathcal{U}}{\partial t}-\frac{1}{\lambda}\int_{R^{n}}D_{\xi}\frac{\partial\mathcal{U}}{\partial m}(x,m,t)(\xi)\,\mathcal{U}(\xi,m,t)m(\xi)d\xi-\frac{1}{\lambda}D_{x}\mathcal{U}(x,m,t)\,\mathcal{U}(x,m,t)+\label{eq:5-20-1}\\
+(Q+\bar{Q})x+(S^{*}\bar{Q}Sm_{1}-\bar{Q}S-S^{*}\bar{Q})\bar{x}=0\nonumber 
\end{align}
\[
\mathcal{U}(x,m,T)=(Q_{T}+\bar{Q_{T}})x+(S_{T}^{*}\bar{Q_{T}}S_{T}m_{1}-\bar{Q}_{T}S_{T}-S_{T}^{*}\bar{Q_{T}})\bar{x}
\]
 whose solution is 

\begin{equation}
\mathcal{U}(x,m,t)=D_{x}U(x,m,t)=P(t)x+\Sigma(t;m_{1})\bar{x}\label{eq:5-21}
\end{equation}
This statement is easily verified.

\subsection{SYSTEM OF HJB-FP EQUATIONS}

We now look at the system (\ref{eq:5-13}) which reads 

\begin{align}
-\frac{\partial u}{\partial s}+\frac{1}{2\lambda}|Du|^{2} & =\dfrac{1}{2}x*(Q+\bar{Q)}x+\bar{x}^{*}(s)(S^{*}\bar{Q}S\,m_{1}(s)-\bar{Q}S-S^{*}\bar{Q})x+\dfrac{1}{2}\bar{x}^{*}(s)S^{*}\bar{Q}S\bar{x}(s)\label{eq:5-13-2}\\
u(x,T) & =\dfrac{1}{2}x*(Q_{T}+\bar{Q}_{T})x+\bar{x}^{*}(T)(S_{T}^{*}\bar{Q}_{T}S_{T}\,m_{1}(T)-\bar{Q}_{T}S_{T}-S_{T}^{*}\bar{Q}_{T})x+\dfrac{1}{2}\bar{x}^{*}(T)S_{T}^{*}\bar{Q}_{T}S_{T}\bar{x}(T)\nonumber \\
\frac{\partial m}{\partial s}-\frac{1}{\lambda}\text{div} & (Du\,m)=0\nonumber \\
m(x,t) & =m(x)\nonumber 
\end{align}
 It is immediate to see that $m_{1}(s)=m_{1}.$The function $\bar{x}(s)$
represents the mean $\int_{R^{n}}\xi m(\xi,s)d\xi.$We do not need
to obtain the full probability $m(x,s).$The mean is sufficient. One
can check the formula

\begin{equation}
u(x,s)=\frac{1}{2}x^{*}P(s)x+x^{*}\Sigma(s;m_{1})\bar{x}(s)+\frac{1}{2}\bar{x}(s)^{*}\frac{\partial\Sigma(s;m_{1})}{\partial m_{1}}\bar{x}(s)\label{eq:5-14-1}
\end{equation}

In particular $u(x,t)=U(x,m,t)$ given by (\ref{eq:5-18-1}). Also
$u(x,s)=U(x,m(s),s).$ Note that $\bar{x}(s)$ evolves as follows 

\begin{align}
\frac{d\bar{x}}{ds}+\frac{1}{\lambda}(P(s)+m_{1}\Sigma(s;m_{1}))\bar{x}(s) & =0\label{eq:5-15-1}\\
\bar{x}(t)=\bar{x}\nonumber 
\end{align}
 We have seen that $U(x,m,t)$ is differentiable in $m$ with 

\[
\frac{\partial U}{\partial m}(x,m,t)(\xi)=\bar{x}^{*}\frac{\partial\Sigma(t;m_{1})}{\partial m_{1}}(x+\xi)+\xi^{*}\Sigma(t;m_{1})x+\frac{1}{2}\bar{x}^{*}\frac{\partial^{2}\Sigma(t;m_{1})}{\partial m_{1}^{2}}\bar{x}
\]
 If we consider a test function $\tilde{m}(\xi)$ and define 

\[
\tilde{u}(x,t)=\tilde{u}_{mt;\tilde{m}}(x,t)=\lim_{\theta\rightarrow0}\frac{u_{m+\theta\tilde{m},t}(x,t)-u_{mt}(x,t)}{\theta}
\]
 we get 

\begin{equation}
\tilde{u}(x,t)=x^{*}\Sigma(t;m_{1})\tilde{\bar{x}}+\bar{x}^{*}\frac{\partial\Sigma(t;m_{1})}{\partial m_{1}}\tilde{\bar{x}}+\tilde{m}_{1}(\bar{x}^{*}\frac{\partial\Sigma(t;m_{1})}{\partial m_{1}}x+\frac{1}{2}\bar{x}^{*}\frac{\partial^{2}\Sigma(t;m_{1})}{\partial m_{1}^{2}}\bar{x})\label{eq:5-16-1}
\end{equation}
We can also compute 

\[
\tilde{u}(x,s)=\tilde{u}_{mt;\tilde{m}}(x,s)=\lim_{\theta\rightarrow0}\frac{u_{m+\theta\tilde{m},t}(x,s)-u_{mt}(x,s)}{\theta}
\]
 We have 

\begin{equation}
\tilde{u}(x,s)=x^{*}(\Sigma(s;m_{1})\tilde{\bar{x}}(s)+\frac{\partial\Sigma(s;m_{1})}{\partial m_{1}}\bar{x}(s)\tilde{m}_{1})+\bar{x}^{*}(s)\frac{\partial\Sigma(s;m_{1})}{\partial m_{1}}\tilde{\bar{x}}(s)+\tilde{m}_{1}\frac{1}{2}\bar{x}^{*}(s)\frac{\partial^{2}\Sigma(s;m_{1})}{\partial m_{1}^{2}}\bar{x}(s)\label{eq:8-1-1}
\end{equation}
in which 

\begin{equation}
\frac{d\tilde{\bar{x}}(s)}{ds}+\frac{1}{\lambda}(P(s)+m_{1}\Sigma(s;m_{1}))\tilde{\bar{x}}(s)+\frac{1}{\lambda}\tilde{m}_{1}(\Sigma(s;m_{1})+m_{1}\frac{\partial\Sigma(s;m_{1})}{\partial m_{1}})\bar{x}(s)=0\label{eq:8-2-1}
\end{equation}
and where $\tilde{m}_{1}=\int_{R^{n}}\tilde{m}(\xi)d\xi.$ The function
$\tilde{u}(x,s)$ is the solution of the linearized equation (\ref{eq:7-11}),
namely 

\begin{align}
-\dfrac{\partial\tilde{u}}{\partial s}+\frac{1}{\lambda}D\tilde{u}.(P(s)x+\Sigma(s;m_{1})\bar{x}(s)) & =x^{*}\left(S^{*}\bar{Q}S\,\bar{x}(s)\tilde{m}_{1}+(S^{*}\bar{Q}S\,m_{1}-\bar{Q}S-S^{*}\bar{Q})\tilde{\bar{x}}(s)\right)+\label{eq:8-3-1}\\
+ & \bar{x}(s)^{*}S^{*}\bar{Q}S\tilde{\bar{x}}(s)\nonumber 
\end{align}
\begin{align*}
\tilde{u}(x,T) & =x^{*}\left(S^{*}\bar{Q}S\,\bar{x}(T)\tilde{m}_{1}+(S^{*}\bar{Q}S\,m_{1}-\bar{Q}S-S^{*}\bar{Q})\tilde{\bar{x}}(T)\right)+\\
+ & \bar{x}(T)^{*}S^{*}\bar{Q}S\tilde{\bar{x}}(T)
\end{align*}
 which can be checked by direct calculation.

\subsection{STATE EQUATION }

We consider equation (\ref{eq:6-9}) in the quadratic case. Since
we know the function $u_{mt}(x,s)$ see (\ref{eq:5-14-1}) the best
is to use the fact that $y_{xmt}(s)$ is the solution of 

\begin{align*}
\frac{dy}{ds} & =-\frac{1}{\lambda}Du(y(s),s)\\
y(t) & =x
\end{align*}
 We get the explicit solution 

\begin{equation}
y_{xmt}(s)=\exp-\frac{1}{\lambda}\int_{t}^{s}P(\sigma)d\sigma\,x-\frac{1}{\lambda}\int_{t}^{s}(\exp-\frac{1}{\lambda}\int_{\sigma}^{s}P(\tau)d\tau\,\Sigma(\sigma)\exp-\frac{1}{\lambda}\int_{t}^{\sigma}(\Sigma(\tau)+P(\tau))d\tau)d\sigma\,\bar{x}]\label{eq:8-4-1}
\end{equation}

\subsection{FORMULATION IN THE HILBERT SPACE }

We can formulate Bellman equation and the Master equation in the Hilbert
space $\mathcal{H}$ . We have first Bellman equation 

\begin{equation}
\frac{\partial V}{\partial t}-\frac{1}{2\lambda}||D_{X}V||^{2}+\frac{1}{2}EX^{*}(Q+\bar{Q})X+\frac{1}{2}EX^{*}(S^{*}\bar{Q}S-\bar{Q}S-S^{*}\bar{Q})EX=0\label{eq:8-5-1}
\end{equation}
\[
V(X,T)=\frac{1}{2}EX^{*}(Q_{T}+\bar{Q}_{T})X+\frac{1}{2}EX^{*}(S_{T}^{*}\bar{Q}_{T}S_{T}-\bar{Q}_{T}S_{T}-S_{T}^{*}\bar{Q}_{T})EX
\]
 whose solution is 

\begin{equation}
V(X,t)=\frac{1}{2}EX^{*}P(t)X+\frac{1}{2}EX^{*}\Sigma(t)EX\label{eq:8-6-1}
\end{equation}
with $\Sigma(t)=\Sigma(t;1).$ The Master equation reads

\begin{equation}
\frac{\partial\mathcal{U}}{\partial t}-\frac{1}{\lambda}D_{X}\mathcal{U}(X,t)\,\mathcal{U}(X,t)+(Q+\bar{Q})X+(S^{*}\bar{Q}S-\bar{Q}S-S^{*}\bar{Q})EX=0\label{eq:8-7-1}
\end{equation}
\[
\mathcal{U}(X,T)=(Q_{T}+\bar{Q}_{T})X+(S_{T}^{*}\bar{Q}_{T}S_{T}-\bar{Q}_{T}S_{T}-S_{T}^{*}\bar{Q}_{T})EX
\]
 whose solution is $\mathcal{U}(X,t)=P(t)X+\Sigma(t)EX.$ Note that
$D_{X}\mathcal{U}(X,t)Z=P(t)Z+\Sigma(t)EZ$ . The state equation is
the solution of 

\begin{equation}
\frac{dY}{ds}=-\frac{1}{\lambda}(P(s)Y(s)+\Sigma(s)EY(s))\label{eq:8-8-1}
\end{equation}
\[
Y(t)=X
\]
 hence the formula 

\begin{align}
Y(s) & =\exp-\frac{1}{\lambda}\int_{t}^{s}P(\sigma)d\sigma X-\label{eq:8-9-1}\\
- & \int_{t}^{s}(\exp-\frac{1}{\lambda}\int_{\sigma}^{s}P(\tau)d\tau\,\Sigma(\sigma)\,\exp-\frac{1}{\lambda}\int_{t}^{\sigma}(P(\tau)+\Sigma(\tau))d\tau)d\sigma)EX\nonumber 
\end{align}

\end{document}